\documentclass{article}

\usepackage{blindtext}
\usepackage{subfiles} 

\usepackage{packages/commonpackage}
\usepackage{packages/package_for_list_of_content}
\usepackage{packages/macro}
\usepackage[a4paper,width=150mm,top=27mm,bottom=27mm]{geometry}

\newcommand{\EQ}[1]{\begin{equation}\begin{split} #1 \end{split}\end{equation}}

\def\diam{\mathrm{diam}}
\def\mods{\mathrm{mod\ }}
\def\mf{\mathfrak}
\newcommand{\Ext}{\mathcal E}
\newcommand{\K}{\mathcal K}
\newcommand{\EEE}{\mathcal{E}}
\newcommand{\aaa}{\mathfrak{a}}

\newtheorem{theorem}{Theorem}[section]

\newtheorem{proposition}[theorem]{Proposition}

\newtheorem{lemma}[theorem]{Lemma}

\newtheorem{example*}{Example}

\newtheorem{remx}[theorem]{Remark}
\newenvironment{remark}
  {\pushQED{\qed}\remx}
  {\popQED\endremx}

\title{Sharp Strichartz estimates on the one-dimensional torus}
\date{}
\author{Yangkendi Deng\footnote{Department of Mathematics and Statistics,
Beijing Institute of Technology, Beijing, China}, Chenjie Fan\footnote{State Key Laboratory of Mathematical Sciences,
Academy of Mathematics and Systems Science,
Chinese Academy of Sciences, Beijing, China}, Shaoming Guo\footnote{Chern Institute of Mathematics, LPMC and
New Cornerstone Science Laboratory,
Nankai University, Tianjin 300071, P.R. China}, Zihua Guo\footnote{School of Mathematics,
Monash University, Australia}, Yongming Luo\footnote{Faculty of Computational Mathematics and Cybernetics,
Shenzhen MSU-BIT University, China}}

\begin{document}

\maketitle

\begin{abstract}
In this article, we prove the sharp Strichartz estimates on the one-dimensional torus.
\end{abstract}

\section{Introduction}
\subsection{Background and main results}
Let $\mathfrak{a}=(a_n)_{n\in \Z}$ be a complex-valued sequence. Denote 
\begin{equation*}
\mathcal{E} \mathfrak{a}(x, t):= \sum_{n\in \Z}
a_n e(nx+ n^2 t).
\end{equation*}
Let $S \subset \mathbb{Z}$ be a finite set, one further defines 
\begin{equation*}
\mathcal{E}_{S} \mathfrak{a}(x, t):= \sum_{n\in S}
a_n e(nx+ n^2 t).
\end{equation*}

We will view $\mathcal{E} \mathfrak{a}(x, t)$, $\mathcal{E}_{S}\aaa$ naturally as functions on $\mathbb{T}^{2}$.

The main result of this paper is the following. 
\begin{theorem}\label{thm: main1}
Let $S \subset \mathbb{Z}$ be a finite set. Then
\begin{equation}\label{eq: estimatemain1}
\Norm{
\sum_{n\in S}
a_n e(nx+ n^2 t)
}_{L^{6,\infty}(\T^2)}
\lesim 
\|\mathfrak{a}\|_{\ell^2},
\end{equation}
\end{theorem}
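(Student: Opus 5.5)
We argue by duality and a level-set estimate. Normalize $\|\mathfrak a\|_{\ell^2}=1$. It suffices to show that for every $\lambda\ge 1$ the set $E=\{(x,t)\in\T^2:|\mathcal E_S\mathfrak a(x,t)|>\lambda\}$ satisfies $|E|\lesim \lambda^{-6}$, uniformly in $S$. For $\lambda<1$ the claim is trivial since $|E|\le 1$. Replacing $\mathcal E_S\mathfrak a$ by its real or imaginary part, and those by their positive or negative parts, we may pick a function $f$ with $|f|=1_E$ and $\lambda|E|\le|\langle \mathcal E_S\mathfrak a,f\rangle|$. Cauchy--Schwarz then gives
\begin{equation*}
\lambda^2|E|^2\le \sum_{n\in S}|\widehat f(n,n^2)|^2\le \sum_{|n|\le N}|\widehat f(n,n^2)|^2=\langle K_N*f,f\rangle ,
\end{equation*}
where $S\subset[-N,N]$ and $K_N(x,t)=\sum_{|n|\le N}e(nx+n^2t)$. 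So the structure of $S$ disappears, and what remains is to prove $\langle K_N*f,f\rangle\le \tfrac12\lambda^2|E|^2+O(\lambda^{-4}\text{-type error})$ uniformly in $N$. The bound must close to $|E|\lesim\lambda^{-6}$. Once $\lambda\gtrsim N^{1/2}$ the set $E$ is empty, so only $1\le\lambda\lesim N^{1/2}$ matters.

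Next we decompose $K_N$ by the circle method in the $t$ variable. For each dyadic $Q_0\le Q\le Q_1$ let $K_Q$ be the part of $K_N$ localized (by smooth cutoffs) to major arcs $|t-b/q|\lesim 1/(qN)$ with $q\sim Q$, $(b,q)=1$. Let $K_{\rm min}$ be the remainder. By Weyl/Gauss-sum bounds and the standard approximation $K_N(x,b/q+\tau)\approx q^{-1}G(b,a;q)\,\cdot$(continuous Schrödinger kernel), we expect
\begin{equation*}
\|K_Q\|_{L^\infty}\lesim \frac{N}{Q^{1/2}}\Big(1+N^2\|t-b/q\|\Big)^{-1/2}, \qquad \|\widehat{K_Q}\|_{\ell^\infty}\lesim \frac{Q^{1+\epsilon'}}{N}\ \text{(or better)} ,
\end{equation*}
and $\|K_{\rm min}\|_{L^\infty}\lesim N Q_1^{-1/2}$. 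The second bound comes from the fact that the Fourier transform of the major arc cutoff at level $Q$ is a Ramanujan-type sum whose size is controlled by divisors, and it should be refined to avoid an $\epsilon$-loss. We choose $Q_1\sim N^2/\lambda^4$ so that the minor part contributes at most $\tfrac14\lambda^2|E|^2$ and can be absorbed.

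For each $Q$ we use the two competing bounds
\begin{equation*}
|\langle K_Q*f,f\rangle|\le \min\big(\|K_Q\|_{L^1\to L^\infty}|E|^2,\ \|\widehat{K_Q}\|_{\ell^\infty}|E|\big).
\end{equation*}
The first bound decreases in $Q$ and the second increases. Summing over dyadic $Q$, the total is dominated by the crossover scale $Q_*$, up to a geometric series, provided both bounds are genuinely power-like in $Q$. This yields $\lambda^2|E|^2\lesim \tfrac14\lambda^2|E|^2+C\,N Q_*^{-1/2}|E|^2$ with $Q_*$ determined by $|E|$. Solving gives $|E|\lesim\lambda^{-6}$. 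The time-localization factor $(1+N^2|\tau|)^{-1/2}$ has to be handled simultaneously, via a further dyadic decomposition in $|\tau|$ treated in the same min-of-two-bounds way.

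The main obstacle is exactly this summation. In Bourgain's argument the $\widehat{K_Q}$ bound carries divisor/logarithmic factors and the sum over $q\le Q$ produces a $\log N$ loss, which is why only $L^6$ with a log loss was known. The weak-type formulation is what lets us trade $L^\infty$ and $\ell^\infty$ bounds scale by scale instead of summing $L^p$ norms. Still, we must prove that the Fourier side bound at denominator scale $Q$ is honestly $\lesim Q/N$ on average over the relevant frequencies, with no $Q^{\epsilon}$ loss. For this we would first try to bound $\langle K_Q*f,f\rangle$ through the $\ell^2$-average of the Ramanujan sums weighted by $|\widehat f|^2$, exploiting that $\sum_{q\sim Q}|c_q(m)|/q$ is bounded on average in $m$. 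If that fails, we would fall back on an induction on $N$ using parabolic rescaling to absorb the residual losses.
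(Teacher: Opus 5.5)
\textbf{There is a genuine gap: the summation step does not close, and the loss is a full power of $N$.}

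Your dual reduction is fine. Proving $\sum_{|n|\le N}|\widehat f(n,n^2)|^2\lesssim |E|^{5/3}$ for all $|f|\le 1_E$ is an equivalent restricted-type form of the theorem. The trouble is the scale-by-scale minimum. Grant your bounds even without the $\epsilon$: $\|K_Q\|_{L^\infty}\lesssim NQ^{-1/2}$ and $\|\widehat{K_Q}\|_{\ell^\infty}\lesssim Q/N$. The competitors $NQ^{-1/2}|E|^2$ and $(Q/N)|E|$ cross at $Q_*=(N^2|E|)^{2/3}$, where both equal $N^{1/3}|E|^{5/3}$. So what you actually get is $\lambda^2|E|^2\lesssim N^{1/3}|E|^{5/3}$, that is $|E|\lesssim N\lambda^{-6}$, not $|E|\lesssim\lambda^{-6}$.

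Neither input can be improved, because both are sharp. The sup bound is attained at $t=b/q$. Write $\phi_Q$ for the level-$Q$ cutoff in $t$, so $K_Q=K_N\phi_Q$. On the parabola itself, $\widehat{K_Q}(\xi,\xi^2)=\widehat{\phi_Q}(0)$, which is the measure $\sim Q/N$ of the level-$Q$ arcs. There the Ramanujan sums are $c_q(0)=\varphi(q)$ and have no cancellation. So no divisor or Gauss-sum refinement changes the exponent.

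\textbf{The diagonal term is circular when $\lambda\lesssim N^{1/4}$.} This diagonal contribution is $\widehat{\phi_Q}(0)\sum_{|n|\le N}|\widehat f(n,n^2)|^2$, a fraction of the very quantity you are bounding. It can only be absorbed into the left side. Since the arcs with $q\le N$ cover $\mathbb{T}$, these fractions sum to about $1$. Absorption therefore needs a genuine minor-arc region, $Q_1\sim N^2/\lambda^4\le cN$, i.e.\ $\lambda\gtrsim N^{1/4}$. Even in that range, the dyadic refinement in $|t-b/q|$ is Bourgain's argument: it is sharp for $p>6$ but yields only $\log N$ or $N^\epsilon$ losses at $p=6$.

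For $1\le\lambda\lesssim N^{1/4}$ your scheme gives nothing better than $|E|\lesssim N\lambda^{-6}$. This is the entire relevant range when $S$ is sparse with large diameter, since $|\mathcal{E}_S\mathfrak{a}|\le(\#S)^{1/2}$. The averaged-Ramanujan remark does not touch this diagonal obstruction. It is also false as stated: the mean over $m$ of $|c_q(m)|$ is $2^{\omega(q)}\varphi(q)/q$, so $\sum_{q\sim Q}|c_q(m)|/q$ averages to $\asymp\log Q$. The fallback ``induction on $N$ via parabolic rescaling'' is not an argument.

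\textbf{What the paper does instead.} It abandons the circle method and bootstraps in $m=\#S$, using the linear constant $\mathcal{K}(m)$ and a bilinear constant $\mathcal{B}(m)$ for two pieces in distinct residue classes mod $3$.
\begin{itemize}
\item A $3$-adic Littlewood--Paley decomposition of $|U|^2$ along the residue tree, with rescaling $n=r+3^jk$, gives $\mathcal{K}(m)^2\lesssim 1+\mathcal{B}(m)$. The scales are summed without loss via weak-$L^3$ orthogonality and telescoping of $A_{j,r}^{4/3}$.
\item A broad--narrow split sends the unbalanced region to $H^{-6}\mathcal{K}(m)^6$.
\item The balanced level set is handled by $3$-adic wave-packet pruning, a martingale square function whose variance is bounded by C\'ordoba--Fefferman biorthogonality, and a local transversal bilinear estimate on $3$-adic blocks. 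Together these give $\mathcal{B}(m)^3\lesssim H^2+H^{-6}\mathcal{K}(m)^6$.
\item Since $\mathcal{K}(m)\le m^{1/2}<\infty$, taking $H$ large closes the bootstrap uniformly in $m$.
\end{itemize}
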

The main point of estimate \eqref{eq: estimatemain1} is that the implicit constant does not depend on $S$.

Two important consequences of Theorem \ref{thm: main1} are the following Strichartz estimates
\begin{theorem}\label{thm: main2}
Let $S \subset \mathbb{Z}$ be a finite set with $\# S\geq 2$,   one has 
\begin{equation}\label{eq: sharpl6}
\|\EEE_{S} \aaa\|_{L^{6}(\T^2)}\leq c_{6}(\log \# S)^{\frac{1}{6}}\|\aaa\|_{\ell^{2}},
\end{equation}
and 
\begin{equation}\label{eq: nolosslp}
\|\EEE \aaa\|_{L^{p}(\T^2)}\leq c_{p}\|\aaa\|_{\ell^{2}}, 2<p<6
\end{equation}
where $c_{p}$ is some  constant  depending only on $2<p\leq 6$.
\end{theorem}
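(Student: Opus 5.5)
Both estimates will be deduced from Theorem~\ref{thm: main1} by combining the weak-type bound with trivial bounds at the endpoints. Normalize $\|\aaa\|_{\ell^2}=1$ and write $F=\EEE_S\aaa$. We have three pieces of information about the distribution function $\lambda(\alpha)=|\{(x,t)\in\T^2:|F(x,t)|>\alpha\}|$:
\begin{itemize}
\item by Theorem~\ref{thm: main1}, $\lambda(\alpha)\lesssim \alpha^{-6}$ uniformly in $S$;
\item by Plancherel on $\T^2$ (the frequencies $(n,n^2)$ are distinct), $\|F\|_{L^2}=1$, hence $\lambda(\alpha)\le \alpha^{-2}$;
\item by Cauchy--Schwarz, $\|F\|_{L^\infty}\le (\# S)^{1/2}$, so $\lambda(\alpha)=0$ for $\alpha>(\#S)^{1/2}$. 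In any case $\lambda(\alpha)\le 1$, since $|\T^2|=1$.
\end{itemize}

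For \eqref{eq: sharpl6} we write
\[
\|F\|_6^6=6\int_0^{(\#S)^{1/2}}\alpha^5\lambda(\alpha)\,d\alpha
\]
and split the range at $\alpha=1$. On $[0,1]$ we use $\lambda\le1$, which contributes $O(1)$. On $[1,(\#S)^{1/2}]$ the weak-type bound gives $\alpha^5\lambda(\alpha)\lesssim \alpha^{-1}$, which integrates to $\lesssim \log (\#S)^{1/2}\lesssim \log\#S$. Since $\#S\ge2$, we have $\log\#S\ge \log 2$, so the $O(1)$ term is absorbed and $\|F\|_6^6\lesssim \log\#S$. This is exactly \eqref{eq: sharpl6}.

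For \eqref{eq: nolosslp} with $2<p<6$, the estimate is a direct application of Marcinkiewicz interpolation between restricted/weak $L^2$ and weak $L^6$. Concretely,
\[
\|F\|_p^p=p\int_0^\infty \alpha^{p-1}\lambda(\alpha)\,d\alpha,
\]
and we split at $\alpha=1$. On $[0,1]$ we use $\lambda\le \alpha^{-2}$ (or simply $\lambda\le1$); since $p>2$, the integrand $\alpha^{p-3}$ is integrable at $0$. On $[1,\infty)$ we use $\lambda\lesssim\alpha^{-6}$; since $p<6$, the integrand $\alpha^{p-7}$ is integrable at $\infty$. This gives $\|\EEE_S\aaa\|_p\le c_p$ uniformly in $S$.

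To pass to $\EEE\aaa$ for general $\aaa\in\ell^2$, apply the bound to $S_N=[-N,N]\cap\Z$. Then $\EEE_{S_N}\aaa\to\EEE\aaa$ in $L^2(\T^2)$, so a subsequence converges almost everywhere, and Fatou's lemma gives the estimate for $\EEE\aaa$.

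There is no real obstacle here: all the depth lies in Theorem~\ref{thm: main1}. The only points requiring care are the uniformity of the weak-type constant in $S$, and using the $L^\infty$ cutoff $(\#S)^{1/2}$ to obtain the logarithm at $p=6$.
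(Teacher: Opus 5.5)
Your proof is correct and follows the paper's route. The paper also bounds $|\EEE_S\aaa|$ by $(\#S)^{1/2}$ via Cauchy--Schwarz, splits into the $O(\log\#S)$ dyadic level sets $|\EEE_S\aaa|\sim 2^k$, applies the weak $L^6$ bound of Theorem~\ref{thm: main1} on each, and obtains \eqref{eq: nolosslp} by real interpolation with the Plancherel $L^2$ bound; your layer-cake integral over $[1,(\#S)^{1/2}]$ is the continuous version of that splitting, and your observations that the $L^2$ endpoint is unnecessary on the probability space $\T^2$ and that Fatou passes from finite $S$ to general $\aaa\in\ell^2$ are correct details the paper leaves implicit.
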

The estimate \eqref{eq: nolosslp} answers a question of Bourgain concerning whether $K_p(N)$ remains bounded uniformly in $N$ for every $p<6$; see \cite{Bourgain93,GMW20}.

\begin{remark}
An interpolation between the $L^{6,\infty}$ and $L^{\infty}$ also gives nontrivial scale invariant estimate for $6<p<\infty$.
\end{remark}

At this point, let us recall that $\EEE\aaa$ is associated with the one-dimensional periodic Schr\"odinger equation
\begin{equation}\label{eq: 1dshcrodinger}
\begin{cases}
iu_t+\Delta u=0,\\
u(0,x)=f(x).
\end{cases}
\end{equation}
It is convenient to denote the solution to \eqref{eq: 1dshcrodinger} as $e^{it\Delta}f$.\\

By standard Littlewood--Paley theory and Plancherel's theorem, \eqref{eq: sharpl6} and \eqref{eq: nolosslp} can be rewritten as the following Strichartz estimates.
\begin{equation}\label{eq: strl6}
\|e^{it\Delta}P_{S}f\|_{L^{6}_{x,t}(\T\times[0,1])}\lesssim (\log \# S)^{\frac{1}{6}}\|f\|_{L^{2}_x(\T)}
\end{equation}
and
\begin{equation*}
\|e^{it\Delta}f\|_{L^{p}_{x,t}(\T\times[0,1])}\lesssim \|f\|_{L^{2}_x(\T)}
\end{equation*}

The second estimate is sharp and has immediate applications to nonlinear Schr\"odinger equations.
\begin{equation}\label{eq: nls}
\begin{cases}
iu_{t}+\Delta u=\mu |u|^{p-2}u,\\
u(0,x)=f,
\end{cases}
\end{equation}
where $\mu=\pm 1$. Equation \eqref{eq: nls} is called focusing when $\mu=-1$ and defocusing when $\mu=1$.

Following \cite{Bourgain93}, a simple contraction mapping argument gives
\begin{theorem}
 The nonlinear Schr\"odinger equation \eqref{eq: nls} on the one-dimensional torus is locally well-posed in $L^2$ when $p<6$. By conservation of mass, the local solution extends globally, yielding global well-posedness in $L^2$.
\end{theorem}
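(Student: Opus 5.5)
The plan is the standard Bourgain contraction argument, with the $L^2$-Strichartz estimate \eqref{eq: nolosslp} of Theorem \ref{thm: main2} as the only new input. The nonlinearity is $|u|^{p-2}u$, so it has $p-1$ factors. We want to control it in $L^{p'}_{x,t}$, and by duality with \eqref{eq: nolosslp} we have the dual estimate
\[
\Big\|\int_0^t e^{i(t-s)\Delta}F(s)\,ds\Big\|_{L^\infty_tL^2_x\cap L^p_{x,t}(\T\times[0,T])}\lesssim \|F\|_{L^{p'}_{x,t}(\T\times[0,T])},\qquad 2<p<6.
\]
The mixed $L^\infty_tL^2_x$ part follows from the $TT^*$ argument and the fact that $\|e^{it\Delta}f\|_{L^p_{x,t}(\T\times[0,1])}\lesssim\|f\|_{L^2}$. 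For the retarded (truncated) operator we use the Christ--Kiselev lemma, which applies because $p'<2<p$. Alternatively, we can work in Bourgain's $X^{0,b}$ spaces: by the transference principle, \eqref{eq: nolosslp} gives $\|u\|_{L^p_{x,t}}\lesssim\|u\|_{X^{0,b}}$ for $b>1/2$. Since $p<6$ is strictly below the endpoint, interpolating with the trivial $L^2=X^{0,0}$ bound gives the same estimate with some $b<1/2$. This is what produces a positive power of $T$.

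The key step is the nonlinear estimate. For $p\le 3$ one can use Hölder directly, but in general it is cleaner to avoid needing $p-1\le p/p'$. So I would work in $X:=X^{0,b}_T$ with $b=1/2+$. By duality, the inhomogeneous Duhamel term $\int_0^te^{i(t-s)\Delta}F\,ds$ satisfies
\[
\big\|\,|u|^{p-2}u\,\big\|_{X^{0,b-1}}=\sup_{\|v\|_{X^{0,1-b}}\le1}\Big|\int |u|^{p-2}u\,\bar v\Big|.
\]
We bound the integral by Hölder, putting all $p$ factors in $L^p_{x,t}$: $\|u\|_{L^p}^{p-1}\|v\|_{L^p}$. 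Then:
\begin{itemize}
\item $v$ is controlled by the interpolated embedding $X^{0,1-b}\subset L^p$, valid for $1-b$ close to $1/2$ and $p<6$.
\item $u$ is controlled by $X^{0,b}\subset L^p$ restricted to $[0,T]$, which gains a factor $T^{\theta}$ with $\theta>0$ from the room $p<6$.
\end{itemize}
This gives
\[
\big\|\Phi(u)\big\|_{X}\le C\|f\|_{L^2}+CT^{\theta}\|u\|_X^{p-1}.
\]
The difference estimate $\||u|^{p-2}u-|w|^{p-2}w\|$ is handled identically, since $\big||u|^{p-2}u-|w|^{p-2}w\big|\lesssim(|u|^{p-2}+|w|^{p-2})|u-w|$.

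Choose $T\sim \|f\|_{L^2}^{-(p-2)/\theta}$. Then $\Phi$ is a contraction on a ball of radius $2C\|f\|_{L^2}$ in $X$, and $X\subset C_tL^2_x$. This gives existence, uniqueness, and continuous dependence, i.e. local well-posedness in $L^2$.

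For globalization, mass conservation $\|u(t)\|_{L^2}=\|f\|_{L^2}$ holds. We justify it by approximating with smooth data, for which the solution is classical, and passing to the limit using continuous dependence. Since the existence time depends only on $\|f\|_{L^2}$, we can iterate with a fixed step and obtain global existence.

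The main obstacle is getting the positive power $T^\theta$, equivalently the $L^p$ estimate with $b<1/2$, from \eqref{eq: nolosslp}. This works precisely because $p<6$ is strictly subcritical, so there is room to interpolate with $L^2$ and to use Hölder in time on the short interval. At $p=6$ this fails because of the $\log$ loss in \eqref{eq: sharpl6}.
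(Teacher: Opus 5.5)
Your proposal is correct and follows the paper's route: the paper only says that Bourgain's contraction-mapping argument goes through once the uniform estimate \eqref{eq: nolosslp} is available, and your $X^{0,b}$ scheme is that argument written out (transference, interpolation with $X^{0,0}=L^2$ to obtain the $b<1/2$ embedding for the dual function, a $T^\theta$ gain from the room $p<6$, and iteration via mass conservation). One harmless slip: since $p/p'=p-1$ identically, your aside about needing $p\le 3$ is spurious, and the direct route in $L^\infty_tL^2_x\cap L^{p_1}_{x,t}$ with $p_1\in(p,6)$ (Christ--Kiselev plus H\"older on $\mathbb{T}\times[0,T]$, giving $\||u|^{p-2}u\|_{L^{p_1'}}\lesssim T^{1-p/p_1}\|u\|_{L^{p_1}}^{p-1}$) already works for all $2<p<6$.
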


For estimate \eqref{eq: strl6}, we note that PDE people are most familiar to \eqref{eq: strl6} with the case $S$ be $[-N,N]\cap \mathbb{Z}$, and now the estimate read as 
\begin{equation}\label{eq: str1}
\|e^{it\Delta}P_{\leq N}f\|_{L^{6}_{x,t}(\T\times[0,1])}\lesssim (\log N)^{\frac{1}{6}}\|f\|_{L^{2}_x(\T)}.
\end{equation}

Estimate \eqref{eq: str1}, as well as \eqref{eq: strl6}, is sharp due to \cite{Bourgain93}.\\

Estimate \eqref{eq: str1} cannot imply even local wellposedness for equation \eqref{eq: nls} in the $p=6$ case, in which the equation is usually called mass critical. LWP for 1d mass critical NLS is known for all $H^{s}>0$, thanks to Bourgain's famous estimate
\begin{equation}\label{eq: epsilonl6}
\|e^{it\Delta}P_{\leq N}f\|_{L^{6}_{x,t}(\T\times[0,1])}\lesssim N^{\epsilon}\|f\|_{L^{2}_x(\T)}.
\end{equation}
The higher-dimensional analogues in the form \eqref{eq: epsilonl6} are also known, even for irrational tori, due to the $L^{2}$ decoupling theory of Bourgain-Demeter \cite{BD15}.
\begin{equation*}
\|e^{it\Delta_{\T^{d}}}P_{\leq N}f\|_{L_{x,t}^{p_{d}}(\T^{d}\times[0,1])}\lesssim_{\epsilon}N^{\epsilon}\|f\|_{L^{2}_x(\T^{d})}, p_{d}=\frac{2(d+2)}{d}.
\end{equation*}
It is conjectured that, 
\begin{equation}\label{eq: conjectuestri}
\|e^{it\Delta_{\T^{d}}}P_{\leq N}f\|_{L_{x,t}^{p_{d}}(\T^{d}\times[0,1])}\lesssim (\log N )^{1/p_{d}}\|f\|_{L^{2}_x(\T^{d})}, p_{d}=\frac{2(d+2)}{d}.
\end{equation}

Estimate \eqref{eq: conjectuestri} was first established in the case $d=2$ in the breakthrough work of Herr--Kwak \cite{HK24}. The current work covers the $d=1$ case, see also \cite{SY26,GLY21,GMW20}.

Before we go to more background discussions in decoupling, let us present more motivation for the sharp $L^{6}$ estimate \eqref{eq: str1}. It is a major problem to understand whether mass-critcal NLS is locally well posed in $L^{2}$, estimate \eqref{eq: str1} is closely related but nevertheless not enough to handle it. Estimate \eqref{eq: str1}
is closely related to GWP for 1d mass critical NLS for $H^{s}$ data with small $L^{2}$ norm, $s>0$. Indeed, one will need a stronger version of \eqref{eq: str1},
\begin{equation}\label{eq: strr1ult}
\|e^{it\Delta}P_{\leq N}f\|_{L^{6}_{x,t}(\T\times[0,(\log N)^{-1}])}\lesssim \|f\|_{L^{2}_x(\T)}.
\end{equation}
This is motivated by the work of Herr-Kwak, which proves the exact analogue of \eqref{eq: strr1ult}, and establishes small-data GWP for 2d mass critical NLS, \cite{HK24}. They obtain large data GWP in another major breakthrough, \cite{HK26}.\\

The current work cannot establish \eqref{eq: strr1ult}. A slightly weaker version was obtained in \cite{SY26}:
\begin{equation*}
\|e^{it\Delta}P_{\leq N}f\|_{L^{6}_{x,t}(\T\times[0,(\log N)^{-C}])}\lesssim \|f\|_{L^{2}_x(\T)}.
\end{equation*}
Combined with the seminal I-method \cite{CKSTT02}, they established small data $H^{s}$ GWP. GWP for general data remains open.\\

Finally, we note that it is of great interest to study \eqref{eq: sharpl6} or \eqref{eq: strl6}. On one hand, decoupling cannot cover those estimates even $\# S$ derivative loss is involved, see also \cite{CDW26}. On the other hand, in \cite{HK24}, one natural approach to \eqref{eq: str1} is by first establishing \eqref{eq: sharpl6} in the case $a_{n}=1$ for $n\in S$, the current work does not take this approach though.

For discussions on the role of the Strichartz estimates in the study of Fourier restriction and decoupling inequalities, we refer to \cite{LLY}.
\subsection{Overview of the proof}
We overview the proof of Theorem \ref{thm: main1} here.
For $m\ge2$, define
\begin{equation}\label{eq:def-KB}
 \K(m):=\sup_{0<\#(\supp h)\le N}
 \frac{\|\Ext \mathfrak{a}\|_{L^{6,\infty}(\T^2)}}{\|\mathfrak{a}\|_{\ell^2}},
 \qquad
 \mathcal{B}(m):=\sup_{(\mf{a}_1,\mf{a}_2)}
 \frac{\|\Ext \mf{a}_1\,\Ext \mf{a}_2\|_{L^{3,\infty}(\T^2)}}{\|\mf{a}_1\|_{\ell^2}\|\mf{a}_2\|_{\ell^2}},
\end{equation}
where in the second supremum $\mf{a}_1, \mf{a}_2$ are nonzero finitely supported functions on
$\Z$, $\#(\supp \mf{a}_1\cup\supp \mf{a}_2)\le m$, and their supports lie in two distinct residue
classes modulo $3$.\\
We have
\begin{proposition}\label{260910lemma2_1}
For every $m\ge 100$, it holds that 
\begin{equation*}
\mathcal{K}(m)^2\lesim \mathcal{B}(m)+1.
\end{equation*}
\end{proposition}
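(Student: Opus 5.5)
We reduce the linear estimate to the bilinear one by splitting the support according to residue classes modulo $3$ and using the Galilean/rescaling symmetry of $\Ext$ to handle the single-class pieces by induction on $m$.

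\textbf{Decomposition.} Fix $\aaa$ with $\#\supp\aaa\le m$ and write $\aaa=\aaa_0+\aaa_1+\aaa_2$, where $\aaa_j$ is the restriction of $\aaa$ to the class $3\Z+j$. Then
\[
|\Ext\aaa|^2\le 3\sum_j|\Ext\aaa_j|^2,
\qquad\text{or}\qquad
|\Ext\aaa|^2\lesssim \max_j |\Ext\aaa_j|^2+\sum_{i\ne j}|\Ext\aaa_i\,\Ext\aaa_j|.
\]
Since $L^{3,\infty}$ is only a quasi-Banach space, we use the quasi-triangle inequality with a fixed constant; there are finitely many terms, so this costs only a constant. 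The cross terms $|\Ext\aaa_i\Ext\aaa_j|$ with $i\neq j$ have supports in distinct residue classes and total support size at most $m$. Hence
\[
\|\Ext\aaa_i\Ext\aaa_j\|_{L^{3,\infty}}\le \mathcal B(m)\|\aaa\|_{\ell^2}^2 .
\]

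\textbf{Diagonal terms via rescaling.} For a single class, $n=3k+j$ gives
\[
nx+n^2t=k(3x+6jt)+9k^2t+(jx+j^2t),
\]
so
\[
|\Ext\aaa_j(x,t)|=|\Ext\mathfrak b(3x+6jt,\,9t)|,\qquad \mathfrak b_k=a_{3k+j}.
\]
The map $(x,t)\mapsto(3x+6jt,9t)$ is a $27$-to-$1$ measure-preserving covering of $\T^2$, so distribution functions are preserved and $\|\Ext\aaa_j\|_{L^{6,\infty}}=\|\Ext\mathfrak b\|_{L^{6,\infty}}$. Therefore the diagonal contribution is bounded by $\K(\#\supp\aaa_j)^2\|\aaa_j\|_{\ell^2}^2$, with no gain at all.

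\textbf{Main obstacle.} This is the main obstacle: the naive bound produces
\[
\K(m)^2\le C\max_j\K(m_j)^2+C\mathcal B(m),
\]
where $C>1$, which does not close. I plan to overcome it with an iteration that avoids losing constants.

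First, if some class carries essentially all of the support (say $\#\supp\aaa_j= m$), rescale and repeat. After at most $O(\log m)$ steps the support must split: every step divides diameters by $3$, and a set of size $\ge2$ cannot stay in a single class forever. So we may reduce to $\aaa$ not concentrated in one class, normalized so that $\supp\aaa$ has $\gcd$ of differences not divisible by $3$.

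Second, and more substantively, I would not use the crude $3\sum_j$ bound. Instead I would use the exact pointwise identity
\[
|\Ext\aaa|^2=\sum_j|\Ext\aaa_j|^2+\sum_{i\neq j}\Ext\aaa_i\overline{\Ext\aaa_j}
\]
and estimate at the level of distribution functions. On the set $\{|\Ext\aaa|>\lambda\}$, either some $|\Ext\aaa_j|>\lambda/\sqrt{3}$ or some cross term exceeds $c\lambda^2$. Using the induction hypothesis in its sharpened form, namely that $\K(m')$ for $m'<m$ is bounded by the quantity we are trying to control, together with the constraint $\sum_j\|\aaa_j\|_2^2=\|\aaa\|_2^2$, the diagonal sets have total measure at most
\[
\K(m-1)^6\lambda^{-6}\,3^3\sum_j\|\aaa_j\|_2^6 .
\]
This is small relative to $\|\aaa\|_2^6$ unless one class dominates in $\ell^2$. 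When one class dominates in $\ell^2$ (the remaining classes have small mass $\delta$), I would treat $\aaa=\aaa_j+\text{rest}$ perturbatively: the rest contributes through bilinear terms, bounded by $\mathcal B(m)$, and through its own $L^{6,\infty}$ norm of size $\K\delta$. Balancing the two regimes in $\delta$ should yield
\[
\K(m)^2\le \tfrac12\K(m)^2+C(\mathcal B(m)+1),
\]
with the $+1$ absorbing base cases $m<100$, where $\K(m)\lesssim \sqrt m$ is trivially bounded. Since $\K(m)<\infty$ for each finite $m$, the claim follows. Getting the constant in front of the diagonal term strictly below $1$ is the step I expect to require the most care.
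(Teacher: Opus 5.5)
\paragraph{There is a genuine gap.} Your decomposition, the Galilean rescaling, and the bound of the cross terms by $\mathcal B(m)$ are all correct; the paper uses the same rescaling to bound each sibling product. The problem is the iteration that is supposed to close the argument: it cannot close, and the claims made for it are false.

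First, the diagonal bound is never small. Put $x_j=\|\mathfrak a_j\|_{\ell^2}^2/\|\mathfrak a\|_{\ell^2}^2$, so that $\sum_j x_j=1$. The power-mean inequality then gives $27\sum_j x_j^3\ge 3$. Hence your union bound over the sets $\{|\Ext\mathfrak a_j|>\lambda/\sqrt3\}$ is never below $3\K(m-1)^6\lambda^{-6}\|\mathfrak a\|_{\ell^2}^6$. The minimum is attained in the balanced case, so the bound is not small in any regime.

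Second, the dominated regime gives no room either. Take $x_{j_0}=1-\delta^2$. The self-interaction terms contribute $\K(m-1)^2(1-\delta^2)+\K(m)^2\delta^2\ge \K(m-1)^2$, which may already equal $\K(m)^2$. The cross term then adds $\mathcal B(m)\delta$ on top, linear in $\delta$, and the quasi-triangle inequality in $L^{3,\infty}$ adds a multiplicative loss. There is nothing to absorb the cross term into, so no balancing in $\delta$ produces $\K(m)^2\le\frac12\K(m)^2+C(\mathcal B(m)+1)$.

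The obstruction is structural. Any recursion that peels off one $3$-adic level at a time pays a cross term at every branching level of the residue tree of $\supp\mathfrak a$. Suppose even that the diagonal cost exactly a factor $1$, as it would in a genuine norm. Unrolling the recursion then bounds $\||\Ext\mathfrak a|^2\|$ by $A$ plus $\mathcal B(m)$ times $\sum_{j,r}\sum_{r_1'\neq r_2'}\sqrt{A_{j+1,r_1'}A_{j+1,r_2'}}$. This tree sum is not $O(A)$: for $\supp\mathfrak a=\{0,1,3,\dots,3^K\}$ with equal coefficients it is of order $\sqrt m\,A$.

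\paragraph{The missing idea: orthogonality across scales.} This lets the paper avoid induction entirely; $\K$ never appears on the right-hand side. The steps are as follows.

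It applies the $3$-adic Littlewood--Paley (martingale) inequality in $L^{3,\infty}$ to $|U|^2$ at all levels at once:
$\||U|^2\|_{L^{3,\infty}}\lesssim A+\bigl(\sum_j\|D_j\|_{L^{3,\infty}}^2\bigr)^{1/2}$.
The bottom term $P_{J_0}|U|^2\equiv A$ is the source of the $+1$. Each $D_j=\sum_r Z_{j,r}$ contains only sibling cross terms, never a diagonal term that would require $\K$.

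At fixed $j$, the $Z_{j,r}$ have disjoint Fourier supports $\Xi_{j,r}$ with uniformly $L^\infty$-bounded projections. Interpolation therefore gives $\|D_j\|_{L^{3,\infty}}\lesssim\bigl(\sum_r\|Z_{j,r}\|_{L^{3,\infty}}^{3/2}\bigr)^{2/3}$.

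Your rescaling gives $\|Z_{j,r}\|_{L^{3,\infty}}\lesssim\mathcal B(m)\Delta_{j,r}^{1/2}$.

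Finally, the elementary bound $\Delta_{j,r}\le 3A_{j,r}^{2/3}\Lambda_{j,r}$ holds with $\Lambda_{j,r}=A_{j,r}^{4/3}-\sum_{r'}A_{j+1,r'}^{4/3}$. These $\Lambda_{j,r}$ telescope to at most $A^{4/3}$, which yields $\sum_j\|D_j\|_{L^{3,\infty}}^2\lesssim\mathcal B(m)^2A^2$ independently of the depth of the tree. It is this $\ell^2$ summation in $j$ and $\ell^{3/2}$ summation in $r$, rather than the $\ell^1$ tree sum above, that makes the bound uniform in $m$.
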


\begin{proposition}\label{prop:bilinear to linear}
For every $H\ge 1$ and $m\ge 100$, it holds that 
\[
\mathcal{B}(m)^3
\lesim
H^2+H^{-6}\mathcal{K}(m)^6.
\]
\end{proposition}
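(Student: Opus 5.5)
The plan is to split the level set of $\Ext\mf a_1\,\Ext\mf a_2$ according to whether one of the two factors is abnormally large. The large part will be controlled by $\mathcal K(m)$. The balanced part will be controlled by a bilinear $L^6$-type counting estimate, and it is there that the separation of $\supp\mf a_1,\supp\mf a_2$ into distinct residue classes modulo $3$ is used. After normalizing $\|\mf a_1\|_{\ell^2}=\|\mf a_2\|_{\ell^2}=1$, it suffices to show for every $\lambda>0$ that
\[
\lambda^3\,\bigl|\{|\Ext\mf a_1\,\Ext\mf a_2|>\lambda\}\bigr|\lesssim H^2+H^{-6}\mathcal K(m)^6 .
\]
Taking the supremum over $\lambda$ and over admissible pairs then gives $\mathcal B(m)^3$ on the left.

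\emph{Step 1: the unbalanced part.} Let $E_\lambda=\{|\Ext\mf a_1\,\Ext\mf a_2|>\lambda\}$. Write $E_\lambda\subset E^1\cup E^2\cup E'$, where
\[
E^j=\{|\Ext\mf a_j|>H\lambda^{1/2}\},\qquad j=1,2,
\]
and $E'$ is the rest of $E_\lambda$. Each $\mf a_j$ is supported on at most $m$ points, so the definition of $\mathcal K(m)$ gives
\[
|E^j|\le \mathcal K(m)^6 (H\lambda^{1/2})^{-6}=H^{-6}\mathcal K(m)^6\lambda^{-3}.
\]
This is exactly the $H^{-6}\mathcal K(m)^6$ term.

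\emph{Step 2: the balanced part.} On $E'$ we have $|\Ext\mf a_1|,|\Ext\mf a_2|\le H\lambda^{1/2}$ and $|\Ext\mf a_1\Ext\mf a_2|>\lambda$. It follows that
\[
|\Ext\mf a_1|\ge \lambda\,(H\lambda^{1/2})^{-1}=\lambda^{1/2}/H,
\]
so $|\Ext\mf a_1|^2\ge \lambda/H^2$ on $E'$. Therefore
\[
\frac{\lambda^3}{H^2}|E'|\le \int_{E'}|\Ext\mf a_1\Ext\mf a_2|^2|\Ext\mf a_1|^2\le \int_{\T^2}|\Ext\mf a_1|^4|\Ext\mf a_2|^2 .
\]
Hence it suffices to prove the bilinear estimate
\[
\int_{\T^2}|\Ext\mf a_1|^4|\Ext\mf a_2|^2\lesssim 1
\]
for residue-separated pairs, uniformly in the size of the supports. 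This gives $\lambda^3|E'|\lesssim H^2$ and completes the proof.

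\emph{Step 3 (main obstacle): the separated $L^6$ count.} By Plancherel the integral in Step 2 equals
\[
\sum \mf a_1(n_1)\mf a_1(n_2)\mf a_2(k)\,\overline{\mf a_1(n_3)\mf a_1(n_4)\mf a_2(k')},
\]
summed over solutions of
\[
n_1+n_2+k=n_3+n_4+k',\qquad n_1^2+n_2^2+k^2=n_3^2+n_4^2+k'^2,
\]
with $n_i\equiv r_1$ and $k,k'\equiv r_2\pmod 3$, $r_1\ne r_2$. I would proceed as follows.
\begin{enumerate}
\item Substitute $d=k'-k$, so that the two equations become $n_1+n_2-n_3-n_4=d$ and $(n_1^2+n_2^2-n_3^2-n_4^2)-2kd=d^2$.
\item Apply Cauchy--Schwarz in the pair $(n_1,n_2)$ and in $(n_3,n_4)$ separately. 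This reduces the problem to bounding, uniformly in the data, the number of $(k,k')$ attached to a fixed value of $(n_1+n_2,\,n_1^2+n_2^2)$, together with an $\ell^2$ sum over $d$.
\item Use the congruence condition $d\equiv 0$ or $n_1+n_2-n_3-n_4\not\equiv 0\pmod 3$, which is forced by $r_1\ne r_2$. The aim is to rule out the degenerate diagonal configurations responsible for the $\log N$ loss in the unrestricted $L^6$ estimate.
\end{enumerate}

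I expect this count to be the delicate point, because divisor-type bounds naively reintroduce logarithmic or $N^\epsilon$ losses. If a uniform bound fails, my fallback is to weaken Step 2. Instead of using $|\Ext\mf a_1|^2$ as the weight, I would perform a further dyadic decomposition of $E'$ in the sizes of $|\Ext\mf a_1|$ and $|\Ext\mf a_2|$. I would then use a restricted, level-set version of the counting estimate on each piece, exploiting that only $O(\log H)$ dyadic scales occur between $\lambda^{1/2}/H$ and $H\lambda^{1/2}$. This costs at most a power of $H$, which can be absorbed by adjusting the exponent of $H$ in the final bound.
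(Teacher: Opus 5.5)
Your Step 1 is the paper's narrow/broad split, and it produces the $H^{-6}\mathcal K(m)^6$ term in the same way. The gap is in Step 2. The estimate you reduce to,
\[
\int_{\T^2}|\Ext\mf a_1|^4|\Ext\mf a_2|^2\lesssim\|\mf a_1\|_{\ell^2}^4\|\mf a_2\|_{\ell^2}^2
\quad\text{uniformly over residue-separated pairs},
\]
is false, so no counting argument in Step 3 can establish it.

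To see this, take $\mf a=\mathbbm{1}_{[1,N]}$, put $I(N):=\int_{\T^2}|\Ext\mf a|^6$, and split $\Ext\mf a=U_0+U_1+U_2$ by residues modulo $3$.
\begin{enumerate}
\item By AM--GM, every mixed degree-six monomial in $|U_0|,|U_1|,|U_2|$ is bounded by $\epsilon\sum_\rho|U_\rho|^6+C_\epsilon\sum_{\rho\neq\rho'}|U_\rho|^4|U_{\rho'}|^2$.
\item The parabolic rescaling $n=\rho+3k$ (the one used in the proof of Lemma~\ref{lem:est Zjr}) gives $\int|U_\rho|^6\le I(\lfloor N/3\rfloor+1)$.
\item Your estimate would then yield $I(N)\le 3(1+C\epsilon)\,I(\lfloor N/3\rfloor+1)+C_\epsilon N^3$, hence $I(N)\lesssim N^3$.
\end{enumerate}
This contradicts Bourgain's lower bound $I(N)\gtrsim N^3\log N$, which is exactly the sharpness of \eqref{eq: str1}. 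Equivalently: via $|U_1U_2|^3\le\frac12(|U_1|^4|U_2|^2+|U_1|^2|U_2|^4)$, your estimate gives a uniform strong $L^3$ bilinear bound. The argument of Section~\ref{section lin to bilin} runs equally well with strong norms, so this would give a uniform $L^6$ bound. Your worry in Step 3 is therefore well founded: the logarithm is not confined to interactions within one residue class, and residue-separated interactions carry it too.

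What is missing is an estimate that holds one level at a time, namely Proposition~\ref{260911prop3_1}: $\alpha^6|L_{\alpha,H}|\lesssim H^2A^3$. The paper proves it as follows.
\begin{enumerate}
\item It prunes wave packets at the $\alpha$-dependent amplitude $\lambda=100HA/\alpha$. The balanced condition $|U_i|<2H\alpha$ on $L_{\alpha,H}$ is what makes the pruning error $2A/\lambda=\alpha/(50H)$ harmless for the product.
\item It runs a stopping-time decomposition of the square functions $G_k$ on $3$-adic blocks.
\item It uses $\rho_1\neq\rho_2$ only locally, through the one-element convolution fibres on each $\Gamma_{k,Q}$-block (Lemma~\ref{260911lemma4_3}).
\item It sums over the stopping regions using the variance bound $\lesssim\lambda^2A$ (Lemma~\ref{260911lemma4_2}).
\end{enumerate}
The output, $\alpha^2\lambda^2A\sim H^2A^3$, is uniform in $\alpha$ but genuinely weak-type. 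The example above shows it cannot be upgraded to a bound for an integral over all levels, such as yours.

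Your fallback does not supply this mechanism. A ``restricted, level-set version of the counting estimate'' on each dyadic piece is precisely what has to be proved. Plancherel counting by itself only controls integrals over all levels at once, and that is where the logarithm lives.
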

Taking $H$ large, plug Proposition \ref{prop:bilinear to linear} into 
 Proposition \ref{260910lemma2_1},  Theorem \ref{thm: main1} follows.

Proposition  \ref{260910lemma2_1} will be proven in Section \ref{section lin to bilin}.

Proposition \ref{prop:bilinear to linear}  will be proven in Section \ref{Section: bi to li}.

\subsection{Notation}

We write \(\|\cdot\|_{L^p}\) and 
\(\|\cdot\|_{\ell^p}\) for the usual Lebesgue norms. We use the notation
\(A\lesssim B\) if \(A\le CB\) for an absolute constant \(C>0\), and
\(A\lesssim_p B\) if the implicit constant may depend on \(p\). We write
\(A\sim B\) when both \(A\lesssim B\) and \(B\lesssim A\) hold. When there is no confusion, we short $r\ \mods 3^{j}$ as $r(3^{j})$.

\subsection{AI usage and more results}
The use of LLM models are central to the current work, many rounds of chats are involved, and we believe the LLM models are impacted in particular by the work of \cite{HK24, SY26, GLY21, CDW26}, but it is fair to say the first proof is given by LLM model, and that proof clearly is building upon \cite{GLY21}. It is also fair to say the first AI generated proof is not very readable from our perspective. We have communicated an AI generated version to several experts in the field.

The current version is not an AI generated work, efforts has been made to digest, verify, and organize the material.

We mention that the primary goal of the research was to prove $L^6$ sharp Srichartz, but it turns out, also assissted by LLM models, this proof can be generalzied to many other important problems, including the following:

\begin{itemize}
\item $L^p$ estimates for KdV equation: for $2\leq p<6$
\EQ{
\Norm{
\sum_{n\in S}
a_n e(nx+ n^3 t)
}_{L^{6,\infty}(\T^2)}
\lesssim 
\|\mathfrak{a}\|_{\ell^2},\\
\Norm{
\sum_{n\in S}
a_n e(nx+ n^3 t)
}_{L^{p}(\T^2)}
\lesssim 
\|\mathfrak{a}\|_{\ell^2},
}
where the implicit constants are independent of $S$. 

\item Sharp decoupling inequality on $q$-adic field: for an odd prime $q$ and $m$ occupied canonical caps
\begin{equation}
\norm{F}_{L^6(\mathbb{Q}_q^2)}^6\lesssim_q \log(2m)\left(\sum_\theta \norm{F_\theta}_\infty^2\right)^2\left(\sum_\theta \norm{F_\theta}_2^2\right).
\end{equation}

In Theorem 1.2 of \cite{GLY21}, the bound is $(\log(2m))^{12+\epsilon}$.

\item Ill-posedness in $L^2$ for the mass critical NLS \eqref{eq: nls} ($p=6$) in the following sense: the solution map fails to be uniformly continuous on any $L^2$ ball centered at zero to $C([0,T]:L^2)$ for every $T>0$. This is an analogue result of Herr-Kwak (\cite{HK26}) for 2D mass critical NLS.

\end{itemize}

\section{Proof of Theorem \ref{thm: main2}}
In this section we give the proof of Theorem \ref{thm: main2}, assuming Theorem \ref{thm: main1} in place.

By the classical real interpolation, estimate \eqref{eq: estimatemain1} and direct $L^{2}$ estimate via Plancherel, one obtains no derivative loss estimate \eqref{eq: nolosslp}.

For \eqref{eq: sharpl6}, we assume $\|a\|_{\ell^{2}}=1$.  One observes that, by Cauchy--Schwarz,
\[
\|\EEE_S\aaa\|_{L^\infty}
\leq
\sum_{n\in S}|a_n|
\lesssim
m^{1/2}.
\]

By splitting $\EEE a$ into $\log \# S$ pieces so that for each piece $|\EEE a|\sim 2^{k}$ for some $k\in \mathbb{Z}$. Apply \eqref{eq: estimatemain1} to $\EEE a$ (and for each pieces) and sum up, the desired estimate \eqref{eq: sharpl6} follows.

\section{Proof of Proposition \ref{260910lemma2_1}}\label{section lin to bilin}

The proof of Proposition \ref{260910lemma2_1} is a refinement of the bilinearizing argument in \cite[Section 5]{GLY21}. The argument in \cite{GLY21} uses several triangle/Minkowski inequalities, and loses several powers of $\log$. In the weak $L^6$ setting, we will use weak $L^3$ analogues of the two frequency-orthogonality estimates in \eqref{eq:lp theory for U} and \eqref{260912e2_22}, so that we only lose an absolute constant.

To proceed, we briefly recall some facts in p-adic Fourier analysis, see \cite{Taibleson75}, and \cite{VVZ94} for more details.
\subsection{3-adic Fourier projections}
We will only use $p=3$, but a general prime does not equal to 2 also works.\\

For a function $V(x)$ on $\mathbb{T}$, and let $j$ be an integer, one defines
\begin{equation}
P_{j}V(x)=\frac{1}{3^{j}}\sum_{l=0}^{3^{j-1}}V(x+\frac{l}{3^{j}}).
\end{equation}
i.e.
\begin{equation}
\widehat{P_j V}(\xi)= \widehat{V}(\xi)\cdot \mathbbm{1}_{\{3^j| \xi\}},
\end{equation}
Notice that
\(
\xi\equiv r\pmod{3^j}
\Longleftrightarrow
|\xi-r|_3\leq 3^{-j}.
\)
Thus, in the $3$-adic language, $P_{j}$ is precisely the Fourier
projection onto the $3$-adic ball of radius $3^{-j}$ centered at $0$
in the $\xi$-variable.  See more e.g. \cite[Section 2]{GLY21}.

We note that an analogue of classical Littlewood Paley holds
\begin{lemma}\label{lem: lptheory}
For all $J$ be integers and $r>2$,  one has that  
\begin{equation}\label{eq: lptheory}
\|f\|^{2}_{L^{r,\infty}}\lesssim \|P_{J}f\|^{2}_{L^{r,\infty}}+\sum_{0\leq j<J}\|(P_{j+1}f-P_{j}f)\|^{2}_{L^{r,\infty}}
\end{equation}
Note that the implicit constant does not depend on $J$.
\end{lemma}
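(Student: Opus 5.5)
The plan is to deduce the weak-type inequality \eqref{eq: lptheory} from the classical strong-type martingale square function inequality, by real interpolation of a suitable \emph{linear} operator. The weak-type norm is then handled at the very end using the fact that $L^{r/2,\infty}$ is normable when $r>2$.

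\textbf{Step 1: martingale structure.} Note first that $P_0=\mathrm{Id}$, since every integer is divisible by $3^0$. Moreover, $P_j$ is the conditional expectation onto the $\sigma$-algebra $\mathcal F_j$ of $3^{-j}$-periodic sets in $x$, acting in the $x$ variable only; the $t$ variable is a passive parameter. The $\sigma$-algebras $\mathcal F_j$ decrease in $j$, and $P_jP_k=P_{\max(j,k)}$. Setting $D_j:=P_j-P_{j+1}$ for $0\le j<J$ and $D_J:=P_J$, we get $f=\sum_{j=0}^{J}D_jf$. These pieces form a (reverse) martingale difference sequence, the operators $D_j$ are mutually orthogonal idempotents, and $\|D_jf\|=\|P_{j+1}f-P_jf\|$.

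\textbf{Step 2: strong-type square function bounds.} By Burkholder's inequality (equivalently, the Paley--Walsh theory for the $3$-adic filtration), for every $1<p<\infty$,
\[
\Big\|\Big(\sum_{j=0}^{J}|D_jh|^2\Big)^{1/2}\Big\|_{L^p(\T^2)}\lesssim_p\|h\|_{L^p(\T^2)},
\]
uniformly in $J$. This holds for each fixed $t$ on $\T$, and integrating in $t$ gives it on $\T^2$. Now define the linear operator
\[
T(g_0,\dots,g_J):=\sum_{j=0}^{J}D_jg_j .
\]
Pairing with $h$ and using the self-adjointness of $D_j$, Cauchy--Schwarz in $j$, H\"older, and the bound above with $p'$, we obtain
\[
\|Tg\|_{L^p}\lesssim_p\|g\|_{L^p(\ell^2)},\qquad 1<p<\infty,
\]
uniformly in $J$.

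\textbf{Step 3: interpolation.} Since $T$ is linear and bounded $L^{p}(\ell^2)\to L^{p}$ for two exponents $p_0<r<p_1$, real interpolation (Marcinkiewicz, for $\ell^2$-valued functions) gives $\|Tg\|_{L^{r,\infty}}\lesssim_r\|g\|_{L^{r,\infty}(\ell^2)}$. We apply this with $g_j=D_jf$. Since $D_j^2=D_j$ and $D_jD_k=0$ for $j\ne k$, we have $Tg=f$, and therefore
\[
\|f\|_{L^{r,\infty}}\lesssim\Big\|\Big(\sum_j|D_jf|^2\Big)^{1/2}\Big\|_{L^{r,\infty}}.
\]

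\textbf{Step 4: exchange the sum and the norm.} We have $\|(\sum_j|D_jf|^2)^{1/2}\|_{L^{r,\infty}}^2=\|\sum_j|D_jf|^2\|_{L^{r/2,\infty}}$. Because $r/2>1$, the space $L^{r/2,\infty}$ admits an equivalent norm, so the triangle inequality holds with a constant depending only on $r$, and not on the number of summands. Hence
\[
\Big\|\sum_j|D_jf|^2\Big\|_{L^{r/2,\infty}}\lesssim_r\sum_j\||D_jf|^2\|_{L^{r/2,\infty}}=\sum_j\|D_jf\|_{L^{r,\infty}}^2,
\]
which is \eqref{eq: lptheory}.

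\textbf{Main obstacle.} The only delicate points are the uniformity in $J$ and the fact that $L^{r,\infty}$ is not a normed space. The uniformity in $J$ comes from Burkholder's constants, which do not depend on the length of the martingale. The lack of a norm is circumvented by working with the linear operator $T$ rather than with the sublinear square function, and by using the normability of $L^{r/2,\infty}$, which is exactly where the hypothesis $r>2$ enters.
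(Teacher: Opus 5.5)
Your proof is correct and follows essentially the route the paper indicates in its remark after the lemma: a martingale ($3$-adic Littlewood--Paley) square function bound uniform in $J$, real interpolation to reach $L^{r,\infty}$, and a final Minkowski step in the normable space $L^{r/2,\infty}$, which is exactly where $r>2$ enters. Your linearization through the operator $T(g_0,\dots,g_J)=\sum_j D_j g_j$ supplies the detail the paper leaves implicit, namely how to interpolate the reverse square-function inequality, which is not by itself a bound for an operator acting on $f$.
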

\begin{remark}
We refer to \cite{Taibleson75, VVZ94}. 
We indeed state a weaker version since we applies an extra Minkowski, so $r>2$ is needed.

We note one may indeed prove Lemma \ref{lem: lptheory} directly by some standard martigale estimate and real interpolation (as suggested by LLM).
\end{remark}

\subsection{First reduction}
Let 
\begin{equation}
U(x,t):=\EEE \aaa(x,t)
\end{equation}
and we take $$A:=\|\aaa\|_{\ell^{2}}^{2}.$$

We will pick $J_{0}$ large enough, so that $3^{J_{0}}\gg \diam(\supp(\aaa))$, but we emphasize the key here is all the estimates must be uniform in $J_{0}$.\\

We first note $\|U\|_{L^{6,\infty}}^{2}=\||U|^{2}\|_{L^{3, \infty}}$, and applying Lemma \ref{lem: lptheory} to $|U|^{2}$, we obtain that 
\begin{equation}\label{eq:lp theory for U}
\||U|^2\|_{L^{3,\infty}}\lesim A+ 
\pnorm{
\sum_{j=0}^{J_0-1} 
\norm{
D_j}_{L^{3,\infty}}^2
}^{1/2}.
\end{equation}
where $D_{j}:=P_{j}(|U|^{2})-P_{j+1}(|U|^{2})$.

What is left is to analyze $D_{j}$.

To continue, we need to recall a bit the following residue tree structure. Let $U_{j,r}, A_{j,r}$ be defined for every $j$ by 
\begin{equation*}
U_{j, r}:= 
\mathcal{E} \pnorm{
\mathfrak{a}\cdot \mathbbm{1}_{
\{
n\equiv r\ (3^j)
\}
}
}
,\qquad
A_{j, r}:= 
\sum_{n\equiv r\ (3^j)} |a_n|^2.
\end{equation*}

We note that for all $j$, one has 
\begin{equation}
U_{j}=\sum_{r(3^{j})}U_{j,r}
\end{equation}
And one may observe the useful algebra structure,
\begin{equation}
P_{j}(|U|^{2})=\sum_{r(3^{j})}|U_{i,r}|^{2}.
\end{equation}

For a node $(j,r)$ with $0\leq j<J_0$, its children are the three residue
classes modulo $3^{j+1}$ contained in the residue class $r$ modulo $3^j$.
We denote the set of these children by
\begin{equation}
\label{def of children}
\mathrm{Ch}(j,r)
:=
\left\{
r'\pmod{3^{j+1}}:
r'\equiv r\pmod{3^j}
\right\}.
\end{equation}

We note that 
 \[
U_{j,r}
=
\sum_{r'\in\mathrm{Ch}(j,r)}U_{j+1,r'},
\qquad
A_{j,r}
=
\sum_{r'\in\mathrm{Ch}(j,r)}A_{j+1,r'}.
\]

Let $Z_{j,r}$ be further defined as 
\begin{equation}\label{2.15}
Z_{j, r}:= |U_{j, r}|^2-
\sum_{r'\in \mathrm{Ch}(j, r)}|U_{j+1, r'}|^2.
\end{equation}

Here the key algebra fact is 
\begin{lemma}\label{lem: straightalgbra}
\begin{equation}
D_j=
\sum_{r\ \mods 3^j}
Z_{j, r}
\end{equation}
\begin{equation}\label{eq: algebrazjr}
Z_{j,r}
=
\sum_{\substack{
r_1',r_2'\in\mathrm{Ch}(j,r)\\
r_1'\neq r_2'
}}
U_{j+1,r_1'}\overline{U_{j+1,r_2'}}.
\end{equation}
\end{lemma}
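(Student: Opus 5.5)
The plan is to verify both identities by direct algebra, working with finite sums since $\mathfrak{a}$ is finitely supported.

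\textbf{The formula for $Z_{j,r}$.} This is the simpler of the two, so I would do it first. Every integer $n\equiv r\pmod{3^j}$ lies in exactly one residue class $r'\in\mathrm{Ch}(j,r)$. Hence
\[
U_{j,r}=\sum_{r'\in\mathrm{Ch}(j,r)}U_{j+1,r'}.
\]
Expanding $|U_{j,r}|^2=U_{j,r}\overline{U_{j,r}}$ as a double sum over pairs $(r_1',r_2')\in\mathrm{Ch}(j,r)^2$, the diagonal terms $r_1'=r_2'$ give exactly $\sum_{r'\in\mathrm{Ch}(j,r)}|U_{j+1,r'}|^2$. Subtracting this, as in the definition \eqref{2.15}, leaves the off-diagonal sum, which is \eqref{eq: algebrazjr}.

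\textbf{The formula for $D_j$.} I would first establish the stated algebraic fact
\[
P_j(|U|^2)=\sum_{r\ (3^j)}|U_{j,r}|^2
\]
on the Fourier side. Write
\[
|U|^2=\sum_{n,m}a_n\overline{a_m}\,e\bigl((n-m)x+(n^2-m^2)t\bigr).
\]
Here $P_j$ acts in the $x$ variable, keeping only frequencies $\xi=n-m$ divisible by $3^j$. This works because $P_j$ averages over $x$-translates, and the $t$-frequency is irrelevant. So $P_j(|U|^2)$ is the sum over pairs with $n\equiv m\pmod{3^j}$.

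Grouping these pairs by their common residue $r$ modulo $3^j$ gives $\sum_r U_{j,r}\overline{U_{j,r}}$. The same computation at level $j+1$ gives
\[
P_{j+1}(|U|^2)=\sum_{r'\ (3^{j+1})}|U_{j+1,r'}|^2.
\]
Each $r'$ modulo $3^{j+1}$ is a child of exactly one $r$ modulo $3^j$. So the level-$(j+1)$ sum can be reorganized as
\[
\sum_{r\ (3^j)}\ \sum_{r'\in\mathrm{Ch}(j,r)}|U_{j+1,r'}|^2.
\]
Subtracting the two expressions gives $D_j=\sum_r Z_{j,r}$.

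\textbf{Where care is needed.} The only potential obstacle is bookkeeping: checking that the averaging definition of $P_j$ really matches the multiplier $\mathbbm{1}_{3^j\mid\xi}$. This holds because
\[
\frac{1}{3^j}\sum_{l=0}^{3^j-1}e(\xi l/3^j)=\mathbbm{1}_{3^j\mid\xi},
\]
reading the summation range as $l=0,\dots,3^j-1$. It also requires a consistent sign convention for $D_j$ relative to $Z_{j,r}$. For instance, the typo $U_{i,r}$ should read $U_{j,r}$, and the sign is determined by the definitions as written.
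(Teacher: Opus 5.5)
Your proof is correct and is the same direct algebraic verification the paper leaves to the reader. You expand $|U_{j,r}|^2$ over the children $\mathrm{Ch}(j,r)$ to isolate the off-diagonal terms, and you read off $P_j(|U|^2)=\sum_{r\ (3^j)}|U_{j,r}|^2$ from the multiplier $\mathbbm{1}_{\{3^j\mid \xi\}}$ acting on the $x$-frequency $n-m$; subtracting the levels $j$ and $j+1$ then gives $D_j=\sum_r Z_{j,r}$ with the sign as defined. Your reading of the typos (summation range $l=0,\dots,3^j-1$, and $U_{i,r}$ meaning $U_{j,r}$) is also the right one.
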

The proof is straightforward algebraic computations, and we leave it to readers.\\

The proof of Proposition \ref{260910lemma2_1} is based on estimate \eqref{eq:lp theory for U}, with the help of the following lemmas 
\begin{lemma}\label{lem:bound L3ofD}
 For every $0\leq j<J_0$, we have
\begin{equation}
\|D_j\|_{L^{3,\infty}}
\lesssim
\left(
\sum_{r\ \mods 3^j}
\|Z_{j,r}\|_{L^{3,\infty}}^{3/2}
\right)^{2/3}.
\end{equation}
\end{lemma}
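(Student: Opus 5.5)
The plan is to view the Lemma as an $\ell^{3/2}$-orthogonality statement for functions with disjoint space-time Fourier supports. The exponent $3/2$ is the dual of $3$, so it should come from interpolating between $L^2$ orthogonality (an $\ell^2$ sum) and the trivial $L^\infty$ triangle inequality (an $\ell^1$ sum).

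\textbf{Step 1: Fourier support of $Z_{j,r}$.} By \eqref{eq: algebrazjr}, $Z_{j,r}$ is a sum of exponentials $e\bigl((n_1-n_2)x+(n_1^2-n_2^2)t\bigr)$ with $n_1\equiv r_1'$, $n_2\equiv r_2' \pmod{3^{j+1}}$, where $r_1'\neq r_2'$ are children of $r$.
\begin{itemize}
\item The spatial frequency $\xi=n_1-n_2$ satisfies $\xi=3^j u$ with $3\nmid u$.
\item The temporal frequency is $\eta=\xi(n_1+n_2)$. Since $n_1+n_2\equiv 2r \pmod{3^j}$, we get $\eta\equiv 2r\,\xi \pmod{3^{2j}}$.
\end{itemize}
Hence $\widehat{Z_{j,r}}$ is supported in
\[
\Omega_{j,r}:=\{(\xi,\eta):\ \xi\in 3^j\Z\setminus 3^{j+1}\Z,\ \eta\equiv 2r\xi \pmod{3^{2j}}\}.
\]
Because $3\nmid u$, $2$ is invertible modulo $3$, and $\xi=3^ju$, the congruence class of $\eta$ modulo $3^{2j}$ determines $2r u$ modulo $3^j$, and hence determines $r$ modulo $3^j$. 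So the sets $\Omega_{j,r}$, $r \bmod 3^j$, are pairwise disjoint.

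\textbf{Step 2: the projections $Q_r$.} Let $Q_r$ be the Fourier projection onto $\Omega_{j,r}$. The condition $\xi\in3^j\Z\setminus3^{j+1}\Z$ is realized by $P_j-P_{j+1}$ in $x$. The condition $\eta-2r\xi\equiv0 \pmod{3^{2j}}$ is realized by averaging $F(x-2rs,\,t+s)$ over $s\in 3^{-2j}\Z/\Z$. Both are finite averages of translates, or differences of two such averages. Therefore $\|Q_rF\|_{L^p}\le C\|F\|_{L^p}$ for every $1\le p\le\infty$, with $C$ absolute. We also have $Q_rZ_{j,r}=Z_{j,r}$, and $D_j=\sum_r Q_rZ_{j,r}$ by Lemma \ref{lem: straightalgbra}.

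\textbf{Step 3: interpolation.} Consider the operator $T\colon (F_r)_{r \bmod 3^j}\mapsto\sum_r Q_rF_r$. We have two endpoint bounds.
\begin{itemize}
\item By disjointness and Plancherel, $\|T(F_r)\|_{L^2}\le(\sum_r\|F_r\|_{L^2}^2)^{1/2}$.
\item By Step 2 and the triangle inequality, $\|T(F_r)\|_{L^\infty}\lesssim\sum_r\|F_r\|_{L^\infty}$.
\end{itemize}
Complex interpolation then gives $T\colon\ell^{p'}(L^p)\to L^p$ for $2\le p\le\infty$, with constants uniform in $j$ and in the number of pieces.

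\textbf{Main obstacle.} The Lemma needs weak-type norms on both sides. To get them, I would fix two exponents $2<p_0<3<p_1<\infty$ and apply real interpolation (the $K$-method) between $\ell^{p_0'}(L^{p_0})$ and $\ell^{p_1'}(L^{p_1})$, with $\theta$ chosen so that $1/3=(1-\theta)/p_0+\theta/p_1$. The difficulty is that the sequence exponent varies along with the Lebesgue exponent. The real interpolation space $(\ell^{p_0'}(L^{p_0}),\ell^{p_1'}(L^{p_1}))_{\theta,\infty}$ must be shown to contain $\ell^{3/2}(L^{3,\infty})$ with comparable norm.

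I would try to verify this embedding directly by a $K$-functional decomposition. For each $r$, split $F_r$ at height $\lambda_r$, with $\lambda_r$ scaled according to $\|F_r\|_{L^{3,\infty}}$ so that the two pieces land in the respective endpoint spaces. Summing over $r$ should then be controlled by $\sum_r\|F_r\|_{L^{3,\infty}}^{3/2}$.

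The target side is unproblematic, since $(L^{p_0},L^{p_1})_{\theta,\infty}=L^{3,\infty}$. Applying the resulting bound to $F_r=Z_{j,r}$ yields the Lemma.
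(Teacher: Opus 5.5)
Your proposal is correct and follows the paper's route. You use the same Fourier support sets (your $\Omega_{j,r}$ is the paper's $\Xi_{j,r}$) and the same $L^\infty$-bounded projections; you realize these explicitly as averages of translates, where the paper cites results on projections onto subgroups. You also interpolate between the same two endpoints, the $L^2$ Plancherel bound and the $L^\infty$ triangle inequality, which the paper disposes of as ``direct interpolation.''

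The weak-type step you flag as the main obstacle does go through. Normalize $\sum_r\|F_r\|_{L^{3,\infty}}^{3/2}=1$ and take truncation heights $\lambda_r=\mu\|F_r\|_{L^{3,\infty}}^{3/2}$. Each piece then costs a power of $\mu$, and optimizing in $\mu$ gives $K(t)\lesssim t^{\theta}$. The same choice also works directly against the $L^2$ and $L^\infty$ endpoints: one gets $\|\sum_r Q_rH_r\|_\infty\lesssim\mu$ and $\|\sum_r Q_rG_r\|_2^2\lesssim\mu^{-1}$, and taking $\mu\sim s$ at level $s$ bounds the distribution function by $s^{-3}$. So the detour through the intermediate exponents $p_0,p_1$ is unnecessary.
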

\begin{lemma}\label{lem:est Zjr}
For every $0\leq j<J_0$ and every residue
class $r\pmod{3^j}$, we have
\begin{equation}\label{260910e2_23}
\|Z_{j,r}\|_{L^{3,\infty}}
\lesim
\mathcal{B}(m)
\sum_{\substack{
r_1',r_2'\in\mathrm{Ch}(j,r)\\
r_1'\neq r_2'
}}
\sqrt{
A_{j+1,r_1'}A_{j+1,r_2'}
}.
\end{equation}
\end{lemma}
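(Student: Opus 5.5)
The plan is to reduce each off-diagonal term in the algebraic identity \eqref{eq: algebrazjr} of Lemma \ref{lem: straightalgbra} to a bilinear expression of exactly the type appearing in the definition of $\mathcal{B}(m)$. The reduction uses a change of variables on $\T^2$ that rescales the residue class $r \pmod{3^j}$ to all of $\Z$.

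\textbf{Step 1 (splitting into six terms).} By \eqref{eq: algebrazjr}, $Z_{j,r}$ is a sum of exactly $6$ terms $U_{j+1,r_1'}\overline{U_{j+1,r_2'}}$, one for each ordered pair $r_1'\neq r_2'$ in $\mathrm{Ch}(j,r)$. The space $L^{3,\infty}$ is only quasi-normed. However, a sum of a fixed, bounded number of terms satisfies
\[
\|f_1+\dots+f_6\|_{L^{3,\infty}}\lesssim \sum_i\|f_i\|_{L^{3,\infty}}
\]
with an absolute constant, for instance via $\{|\sum f_i|>6\lambda\}\subset\bigcup_i\{|f_i|>\lambda\}$. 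It therefore suffices to show, for each such pair,
\[
\|U_{j+1,r_1'}U_{j+1,r_2'}\|_{L^{3,\infty}}\le \mathcal{B}(m)\sqrt{A_{j+1,r_1'}A_{j+1,r_2'}}.
\]
Here the complex conjugate is irrelevant, since only the modulus enters the $L^{3,\infty}$ quasinorm. If either coefficient sequence vanishes, the term is $0$ and there is nothing to prove.

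\textbf{Step 2 (rescaling the residue class).} Write $n=r+3^jk$ with $k\in\Z$, and set $b^{(i)}_k:=a_{r+3^jk}\mathbbm{1}_{\{r+3^jk\equiv r_i'\,(3^{j+1})\}}$. Expanding $(r+3^jk)^2=r^2+2r3^jk+3^{2j}k^2$ gives
\[
U_{j+1,r_i'}(x,t)=e(rx+r^2t)\,\Ext b^{(i)}\bigl(3^jx+2r3^jt,\;3^{2j}t\bigr).
\]
Hence
\[
|U_{j+1,r_1'}U_{j+1,r_2'}|=|\Ext b^{(1)}\,\Ext b^{(2)}|\circ T,
\]
where $T(x,t)=(3^jx+2r3^jt,\,3^{2j}t)$. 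Since $r_1'\not\equiv r_2'\pmod{3^{j+1}}$ while both are $\equiv r\pmod{3^j}$, the supports of $b^{(1)}$ and $b^{(2)}$ lie in two distinct residue classes of $k$ modulo $3$. Moreover, $\#(\supp b^{(1)}\cup\supp b^{(2)})\le\#\supp\aaa\le m$, and $\|b^{(i)}\|_{\ell^2}^2=A_{j+1,r_i'}$.

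\textbf{Step 3 (measure preservation).} The map $T$ is given by an integer matrix with nonzero determinant $3^{3j}$, so it is a surjective continuous endomorphism of the compact group $\T^2$. Such a map pushes Haar measure forward to Haar measure: the pushforward is a translation-invariant probability measure, or one can check directly that Fourier coefficients are preserved. Consequently $g\circ T$ and $g$ have the same distribution function for every measurable $g$, and so $\|g\circ T\|_{L^{3,\infty}}=\|g\|_{L^{3,\infty}}$. Applying the definition of $\mathcal{B}(m)$ to the pair $(b^{(1)},b^{(2)})$ then yields the required bound. Summing over the six pairs gives \eqref{260910e2_23}.

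The only step needing genuine care is Step 3, namely that the non-injective map $T$ preserves distribution functions. The group-endomorphism argument settles it. Everything else is bookkeeping, and in particular all constants are independent of $j$ and $J_0$.
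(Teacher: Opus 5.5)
Your proposal is correct and follows the paper's proof essentially step for step: the rescaling $n=r+3^jk$, sequences $b^{(i)}$ supported in distinct classes of $k$ modulo $3$, the identity $U_{j+1,r_i'}(x,t)=e(rx+r^2t)\,\Ext b^{(i)}(3^j(x+2rt),3^{2j}t)$, the definition of $\mathcal{B}(m)$, and a sum over the six ordered pairs. The only difference is that you spell out two steps the paper leaves implicit: the quasi-triangle inequality in $L^{3,\infty}$ for a bounded number of summands, and the fact that the non-injective endomorphism $T$ preserves Haar measure and hence distribution functions.
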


\subsection{Proof of Proposition \ref{260910lemma2_1} assuming Lemma \ref{lem:bound L3ofD} and \ref{lem:est Zjr} }

We are now in a position to prove Proposition \ref{260910lemma2_1}.

\begin{proof}[Proof of Proposition \ref{260910lemma2_1}]
We shall prove
\begin{equation*}
\||U|^2\|_{L^{3,\infty}}
\lesssim
\bigl(1+\mathcal{B}(m)\bigr)A.
\end{equation*}
Since $\||U|^2\|_{L^{3,\infty}}=\|U\|_{L^{6,\infty}}^2$, this immediately
implies $\mathcal{K}(m)^2\lesssim 1+\mathcal{B}(m)$. By \eqref{eq:lp theory for U}, we already have
\begin{equation*}
\||U|^2\|_{L^{3,\infty}}
\lesssim
A+
\left(
\sum_{j=0}^{J_0-1}
\|D_j\|_{L^{3,\infty}}^2
\right)^{1/2}.
\end{equation*}
Thus it remains to show
\begin{equation}\label{eq:prop goal}
\sum_{j=0}^{J_0-1}
\|D_j\|_{L^{3,\infty}}^2
\lesssim
\mathcal{B}(m)^2A^2.
\end{equation}
To proceed, we first point out that Lemma
\ref{lem:bound L3ofD} gives
\begin{equation}\label{260912e2_22}
\|D_j\|_{L^{3,\infty}}
\lesssim
\left(
\sum_{r\ \mods 3^j}
\|Z_{j,r}\|_{L^{3,\infty}}^{3/2}
\right)^{2/3}.
\end{equation}
Next, define
\[
\Delta_{j,r}
:=
\sum_{\substack{
r'_1,r'_2\in\mathrm{Ch}(j,r)\\
r'_1\neq r'_2
}}
A_{j+1,r'_1}A_{j+1,r'_2}=|A_{j, r}|^2-
\sum_{r'\in \mathrm{Ch}(j, r)}|A_{j+1, r'}|^2.
\]
Since each parent node has at most three children, Lemma \ref{lem:est Zjr} and Cauchy--Schwarz gives
$$\|Z_{j,r}\|_{L^{3,\infty}}
\lesim
\mathcal{B}(m) \Delta_{j,r}^{1/2}. $$
Thus 
\begin{equation}\label{eq:bound D by Delta}
\|D_j\|_{L^{3,\infty}}^2
\lesssim
\mathcal{B}(m)^2
\left(
\sum_{r\ \mods 3^j}
\Delta_{j,r}^{3/4}
\right)^{4/3}.
\end{equation}
We claim that
\begin{equation}\label{eq:bound Delta}
\Delta_{j,r}
\lesssim
A_{j,r}^{2/3}\Lambda_{j,r},
\end{equation}
where
$$\Lambda_{j,r}
:=
A_{j,r}^{4/3}
-
\sum_{r'\in\mathrm{Ch}(j,r)}
A_{j+1,r'}^{4/3}. $$
To see this, first write $a=A_{j,r}$ and
$x_i=A_{j+1,r_i'}$, where $r_i'\in\mathrm{Ch}(j,r)$. Thus
$a=\sum_i x_i$ and
$\Delta_{j,r}=a^2-\sum_i x_i^2$. If $a=0$, there is nothing to prove.
Otherwise set $t_i=x_i/a$, so that $\sum_i t_i=1$. For $0\leq t\leq1$,
\[
1-t
\leq
3(1-t^{1/3}).
\]
Hence
\[
\begin{aligned}
\Delta_{j,r}
=
a^2\sum_i t_i(1-t_i)
\leq
3a^2\sum_i t_i(1-t_i^{1/3})
=
3a^2
\left(
1-\sum_i t_i^{4/3}
\right)
=
3A_{j,r}^{2/3}\Lambda_{j,r},
\end{aligned}
\]
which proves \eqref{eq:bound Delta} and consequently
$\Delta_{j,r}^{3/4}
\lesssim
A_{j,r}^{1/2}\Lambda_{j,r}^{3/4}$.
Therefore, by H\"older's inequality and $\sum_r A_{j,r}^2\leq A^2$,
\begin{align*}
\left(
\sum_{r\ \mods 3^j}
\Delta_{j,r}^{3/4}
\right)^{4/3}
&\lesssim
\left(
\sum_{r\ \mods 3^j}A_{j,r}^2
\right)^{1/3}
\left(
\sum_{r\ \mods 3^j}\Lambda_{j,r}
\right) \\
&\lesssim A^{2/3} \left(\sum_{r\ \mods 3^j}\Lambda_{j,r}
\right).
\end{align*}

Finally, 
\[
\begin{aligned}
\sum_{j=0}^{J_0-1}
\sum_{r\ \mods 3^j}
\Lambda_{j,r}
&=
A^{4/3}
-
\sum_{r\ \mods 3^{J_0}}
A_{J_0,r}^{4/3}\\
&\leq
A^{4/3}.
\end{aligned}
\]
Combining the above estimates with \eqref{eq:bound D by Delta}, we obtain
\eqref{eq:prop goal}.
\end{proof}

\subsection{Proof of Lemma \ref{lem:bound L3ofD}}
To start, one records the following lemma.
\begin{lemma}
\label{lem:weak-L3-Fourier-synthesis}
Let $\{\widetilde{P}_\nu\}_{\nu\in I}$ be a finite family of Fourier projections
on $\T^2$ whose frequency supports are pairwise disjoint. Suppose that
\begin{equation}\label{uniform bound}
\sup_{\nu\in I}
\|\widetilde{P}_\nu\|_{L^\infty\to L^\infty}
\lesssim 1
\end{equation}
for some $M\geq 1$. If $\widetilde{P}_\nu f_\nu=f_\nu$ for every $\nu\in I$, then
\begin{equation}\label{eq:weak-L3-Fourier-synthesis}
\left\|
\sum_{\nu\in I}f_\nu
\right\|_{L^{3,\infty}}
\lesim
\left(
\sum_{\nu\in I}
\|f_\nu\|_{L^{3,\infty}}^{3/2}
\right)^{2/3}.
\end{equation}
\end{lemma}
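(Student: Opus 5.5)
The plan is to argue by duality, testing $\sum_\nu f_\nu$ against bounded functions supported on sets of finite measure, and to reduce everything to a vector-valued ``Hausdorff--Young type'' inequality for the family $\{\widetilde P_\nu^*\}$. First, I will use the standard characterization of the weak norm for $r=3>1$:
\[
\|F\|_{L^{3,\infty}}\sim \sup_{E}\ \sup_{|g|\le \mathbbm 1_E}|E|^{-2/3}\Big|\int_{\T^2}F\,\overline{g}\Big|.
\]
Fix such $E$ and $g$. Since $\widetilde P_\nu f_\nu=f_\nu$, we have $\int f_\nu \overline g=\int f_\nu\,\overline{\widetilde P_\nu^* g}$. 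Here $\widetilde P_\nu^*$ is the Fourier multiplier with the conjugate symbol, and it is bounded on $L^1$ with norm $\lesssim 1$ by duality with \eqref{uniform bound}. Using the Lorentz duality pairing $|\int f h|\lesssim \|f\|_{L^{3,\infty}}\|h\|_{L^{3/2,1}}$ and then Hölder in $\nu$ with exponents $(3/2,3)$, I get
\[
\Big|\int \sum_\nu f_\nu\overline g\Big|\lesssim \Big(\sum_\nu\|f_\nu\|_{L^{3,\infty}}^{3/2}\Big)^{2/3}\Big(\sum_\nu\|\widetilde P_\nu^*g\|_{L^{3/2,1}}^{3}\Big)^{1/3}.
\]
So it suffices to prove $\sum_\nu\|\widetilde P_\nu^* g\|_{L^{3/2,1}}^3\lesssim |E|^2$.

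To handle the Lorentz norm without mixed-norm real interpolation, I will fix exponents $1<p_0<3/2<p_1<2$ and $s\in(0,1)$ with $\tfrac23=\tfrac{1-s}{p_0}+\tfrac{s}{p_1}$. On the probability space $\T^2$ I will use the elementary Lorentz interpolation inequality
\[
\|h\|_{L^{3/2,1}}\lesssim_{p_0,p_1}\|h\|_{L^{p_0}}^{1-s}\|h\|_{L^{p_1}}^{s},
\]
which follows by splitting the distribution function of $h$ at a suitable level. Next I need the vector-valued bound
\[
\Big(\sum_\nu\|\widetilde P_\nu^* g\|_{L^p}^{p'}\Big)^{1/p'}\lesssim\|g\|_{L^p}\qquad (1\le p\le 2).
\]
This comes from complex interpolation of the linear map $g\mapsto(\widetilde P_\nu^*g)_\nu$ between two endpoints:
\begin{itemize}
\item $L^1\to\ell^\infty(L^1)$, which is the uniform $L^1$ bound;
\item $L^2\to\ell^2(L^2)$, which is Plancherel together with the pairwise disjointness of the frequency supports.
\end{itemize}

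Then I apply Hölder over $\nu$ with exponents $p_0'/(3(1-s))$ and $p_1'/(3s)$. Their reciprocals sum to $3\big(\tfrac{1-s}{p_0'}+\tfrac{s}{p_1'}\big)=3\cdot\tfrac13=1$, because the relation $\tfrac23=\tfrac{1-s}{p_0}+\tfrac{s}{p_1}$ is equivalent to $\tfrac13=\tfrac{1-s}{p_0'}+\tfrac{s}{p_1'}$. This yields
\[
\sum_\nu\|\widetilde P_\nu^*g\|_{L^{3/2,1}}^3\lesssim \Big(\sum_\nu\|\widetilde P_\nu^*g\|_{p_0}^{p_0'}\Big)^{\frac{3(1-s)}{p_0'}}\Big(\sum_\nu\|\widetilde P_\nu^*g\|_{p_1}^{p_1'}\Big)^{\frac{3s}{p_1'}}\lesssim\|g\|_{p_0}^{3(1-s)}\|g\|_{p_1}^{3s}.
\]
Since $|g|\le\mathbbm 1_E$, the right-hand side is at most $|E|^{3(1-s)/p_0+3s/p_1}=|E|^2$, which closes the argument. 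The implicit constants depend only on the fixed choice of $p_0,p_1$ and on the constant in \eqref{uniform bound}, and in particular not on $\#I$.

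The main obstacle I expect is the step passing from $L^{3/2,1}$ norms to a summable $\ell^3$ bound. A direct real interpolation of $\ell^{q}(L^p)$-valued bounds only lands in $\ell^3(L^{3/2,3})$, which is too weak, since the pairing with $L^{3,\infty}$ requires the $L^{3/2,1}$ norm. The trick of bracketing $3/2$ by two strong-type exponents $p_0,p_1$ and using Hölder in $\nu$ is designed precisely to avoid this loss. The remaining points to check are:
\begin{itemize}
\item the Lorentz interpolation inequality, which is routine;
\item the legitimacy of complex interpolation for the finite family, which is standard since $I$ is finite and all operators are bounded Fourier multipliers.
\end{itemize}
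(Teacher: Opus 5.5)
Your proof is correct, and it takes a genuinely different route from the paper's.

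The paper argues on the primal side. It interpolates the synthesis map $(f_\nu)_{\nu}\mapsto\sum_\nu\widetilde P_\nu f_\nu$ between two bounds:
\begin{itemize}
\item $\ell^2(L^2)\to L^2$, from Plancherel plus disjointness;
\item $\ell^1(L^\infty)\to L^\infty$, from the triangle inequality plus the uniform bound.
\end{itemize}
It then reads off $\ell^{3/2}(L^{3,\infty})\to L^{3,\infty}$ at $\theta=1/3$. Your analysis map $g\mapsto(\widetilde P_\nu^*g)_\nu$ is the adjoint of that synthesis map, and your two endpoints $L^1\to\ell^\infty(L^1)$ and $L^2\to\ell^2(L^2)$ are exactly the duals of the paper's. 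So the analytic input is the same; what differs is how the weak-type endpoint is extracted.

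The paper's one-line ``direct interpolation'' tacitly uses the mixed-norm embedding $\ell^{3/2}(L^{3,\infty})\hookrightarrow(\ell^2(L^2),\ell^1(L^\infty))_{1/3,\infty}$. This embedding holds because the second interpolation index $\infty$ is at least the diagonal index $3/2$, which is the favorable direction of Cwikel's theorem. It can be checked directly: set $S=\sum_\nu\|f_\nu\|_{L^{3,\infty}}^{3/2}$ and truncate each $f_\nu$ at height $s_\nu=t^{-2/3}S^{-1/3}\|f_\nu\|_{L^{3,\infty}}^{3/2}$. This gives $K(t,(f_\nu)_\nu)\lesssim t^{1/3}S^{2/3}$. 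So the obstacle you anticipate, landing only in $L^{3/2,3}$-type spaces, does not arise in the primal formulation.

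Your argument instead stays within strong-type complex interpolation: Riesz--Thorin at two exponents $p_0<3/2<p_1$, log-convexity of the $L^{3/2,1}$ norm, and H\"older in $\nu$. It is longer, but fully elementary and self-contained, with constants visibly independent of $\#I$. The paper's argument is shorter but leaves the $K$-functional verification above implicit.
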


\begin{proof}
It follows from a direct interpolation of the following two estimates.

Plancherel gives
\[
\|\sum_{\nu\in I}\widetilde{P}_\nu f_\nu\|_{L^2}
\leq
\left(
\sum_{\nu\in I}\|f_\nu\|_{L^2}^2
\right)^{1/2}.
\]
The triangle inequality gives
\[
\|\sum_{\nu\in I}\widetilde{P}_\nu f_\nu\|_{L^\infty}
\lesssim
\sum_{\nu\in I}\|f_\nu\|_{L^\infty}.
\]
\end{proof}
Note that
\begin{equation}
D_{j}=\sum_{r\ \mods 3^j} Z_{j,r}.
\end{equation}
Define $\Pi_{j,r}$ by the Fourier projection onto $\Xi_{j,r}$, where 
\begin{equation*}
\Xi_{j,r}
:=
\left\{
(\xi,\eta)\in\mathbb Z^2:
3^j\mid \xi,\quad
3^{j+1}\nmid \xi,\quad
3^{2j}\mid (\eta-2r\xi)
\right\}.
\end{equation*}
We can check that, the sets $\Xi_{j,r}$, $r\pmod{3^j}$, are pairwise disjoint for fixed $j$. 

Lemma \ref{lem:bound L3ofD} follows from Lemma \ref{lem:weak-L3-Fourier-synthesis},  once one checks
\begin{equation}\label{eq:PiZ=Z}
\Pi_{j,r}Z_{j,r}=Z_{j,r},
\end{equation}
and 
\begin{equation}\label{eq:bound Pi}
\|\Pi_{j,r}\|_{L^\infty\to L^\infty} \lesssim 1.
\end{equation}

By \eqref{eq: algebrazjr} and the definition of $U_{j+1, r'}$, we have
$\supp(\widehat{Z_{j,r}}) \subset \Xi_{j,r}, $
then we obtain \eqref{eq:PiZ=Z}.

The Fourier projection operator onto a subgroup is bounded from \(L^\infty\) to \(L^\infty\) (see Theorem 1.1 in \cite{BOS21} and Theorem 2.1.2, Lemma 2.1.3 in \cite{Rudin62}). Since \(\Pi_{j,r}\) is the difference of the Fourier projection operators onto the two subgroups
\[
\left\{
(\xi,\eta)\in \Z^2:
3^j\mid\xi,\quad
3^{2j}\mid(\eta-2r\xi)
\right\}
\]
and
\[
\left\{
(\xi,\eta)\in \Z^2:
3^{j+1}\mid\xi,\quad
3^{2j}\mid(\eta-2r\xi)
\right\},
\]
it follows that \eqref{eq:bound Pi} holds.

\subsection{Proof of Lemma \ref{lem:est Zjr}}

Recall that
\[
Z_{j,r}
=
\sum_{\substack{
r_1',r_2'\in\mathrm{Ch}(j,r)\\
r_1'\neq r_2'
}}
U_{j+1,r_1'}\overline{U_{j+1,r_2'}}.
\]
Fix two distinct children $r_1',r_2'\in\mathrm{Ch}(j,r)$. Write
\[
r_i'\equiv r+3^j s_i\pmod{3^{j+1}},
\qquad
s_i\in\{0,1,2\},
\qquad i=1,2.
\]
 For $i=1,2$, define a sequence $\mathfrak{b}_i$ via its elements
\[
b_i(k):=a_{r+3^j k}\,
\mathbbm{1}_{\{k\equiv s_i\text{ mod } 3\}}.
\]
One can check that
\[
U_{j+1,r_i'}(x,t)
=
e(rx+r^2t)\,
\mathcal{E}\mathfrak{b}_i
\bigl(3^j(x+2rt),\,3^{2j}t\bigr),
\qquad i=1,2.
\]
Thus
\[
\|U_{j+1,r_1'}\overline{U_{j+1,r_2'}}\|_{L^{3,\infty}(\mathbb T^2)}
=
\|\mathcal{E}\mathfrak{b}_1
\overline{\mathcal{E}\mathfrak{b}_2}\|_{L^{3,\infty}(\mathbb T^2)}.
\]
The
definition of $\mathcal{B}(m)$ gives
\begin{align*}
\|U_{j+1,r_1'}\overline{U_{j+1,r_2'}}\|_{L^{3,\infty}}
&\leq
\mathcal{B}(m)
\|\mathfrak{b}_1\|_{\ell^2}
\|\mathfrak{b}_2\|_{\ell^2} \\
&= \mathcal{B}(m)
\sqrt{
A_{j+1,r_1'}A_{j+1,r_2'}
}.
\end{align*}

This completes the proof by the triangle inequality and by summing over the finitely many ordered pairs of distinct children.

\section{Proof of Proposition \ref{prop:bilinear to linear}}\label{Section: bi to li}
In this section we give the proof of Proposition \ref{prop:bilinear to linear}. Indeed, we shall prove that the desired proof will follow from a single balanced level set estimate, stated in Proposition
\ref{260911prop3_1}. Combining the unbalanced portion, which is mainly controlled by the linear constant
$\mathcal{K}(m)$, we obtain the estimate 
\[
\mathcal{B}(m)^3
\lesim
H^2+H^{-6}\mathcal{K}(m)^6.
\]

The main work of this section is therefore the proof of Proposition \ref{260911prop3_1}. To this end, we introduce a pruning procedure
on the finite grid $\Gamma_Q$, together with the associated square
functions $G_k$ and a high-low decomposition into the stopping regions
$\Omega_k$ and the low region $L$. The proof of Proposition
\ref{260911prop3_1} is then reduced to three key lemmas: Lemma
\ref{260911lemma4_1} provides the structural and approximation
properties of the pruning decomposition; Lemma \ref{260911lemma4_2}
gives the mean, variance, and global square-function bounds needed to
sum over all stopping regions, without a loss in the number of scales;
Finally, Lemma \ref{260911lemma4_3} supplies the local bilinear estimate on
each $\Gamma_{k,Q}$-block. 

The rest of the paper is organized as follows. In Section \ref{260911section3} we show how Proposition \ref{260911prop3_1} implies
Proposition \ref{prop:bilinear to linear}. In Section \ref{section 3.2} we explain in details how Proposition \ref{260911prop3_1} follows from the key lemmas. Finally, the remaining Section \ref{sec3.3} to Section \ref{sec3.5} are devoted to the proofs of the key lemmas.

\subsection{Reduction to a key level set estimate}\label{260911section3}
Let $\mf{a}_1, \mf{a}_2$ be two sequences lying in two distinct residue classes modulo $3$, and write 
\begin{equation*}
A:= \|\mf{a}_1\|_{\ell^2}^2+ \|\mf{a}_2\|_{\ell^2}^2.
\end{equation*}
Choose $J_0\ge 2$ satisfying 
\(
3^{J_0}> \diam(\supp(\mf{a}_1)\cup \supp(
\mf{a}_2
)).
\)
For an integer $J_0^+\ge 2 J_0$, also denote $Q:= 3^{J_0^+}$ and 
\begin{equation*}
\Gamma_Q:= \{(i_1/Q, i_2/Q): 0\le i_1, i_2< Q\}\subset \T^2.
\end{equation*}
Hence $\Gamma_Q$ forms a grid of $Q^2$ many points. 

For $H\ge 1$ and $\alpha>0$, consider the set 
\begin{equation*}
L_{\alpha,H}:=
\left\{
z\in\T^2:
\begin{aligned}
&\alpha\le |\mathcal{E}\mf{a}_1(z)\mathcal{E}\mf{a}_2(z)|^{1/2}<2\alpha,\\
&|\mathcal{E}\mf{a}_1(z)|^6+|\mathcal{E}\mf{a}_2(z)|^6
\le H^6|\mathcal{E}\mf{a}_1(z)\mathcal{E}\mf{a}_2(z)|^3
\end{aligned}
\right\}.
\end{equation*}
We point out that the set $L_{\alpha,H}$ is similar to the level sets defined in \cite[Proposition 6.3]{GLY21}, but with
the additional balanced condition
\[
|\mathcal{E}\mf{a}_1|^6+|\mathcal{E}\mf{a}_2|^6
\le H^6|\mathcal{E}\mf{a}_1\mathcal{E}\mf{a}_2|^3.
\]
On $L_{\alpha,H}$, this guarantees that both factors are bounded by
$O(H\alpha)$, which allows the pruning error to remain negligible at
the level of their product. The complementary unbalanced region is
controlled directly by the linear weak $L^6$ estimate. This separation
is what allows us to avoid the additional logarithmic losses appearing
in the corresponding argument of \cite{GLY21}.

We now give the crucial estimate associating with the level set $L_{\alpha,H}$.

\begin{proposition}\label{260911prop3_1}
For every $H\ge 1, \alpha>0$ and $J_0^+\ge 2J_0$, it holds that 
\begin{equation*}
\alpha^6 \frac{
\#(L_{\alpha, H}\cap \Gamma_Q)
}{Q^2}
\lesim 
H^2 A^3.
\end{equation*}
\end{proposition}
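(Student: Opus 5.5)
We will prove Proposition \ref{260911prop3_1} by following the level-set strategy of \cite[Proposition 6.3]{GLY21}, replacing every place where a logarithm in the number of scales is lost with an orthogonality (square-function) bound. The balanced condition in $L_{\alpha,H}$ is what makes the pruning errors harmless.

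\textbf{Setup and pruning.} Write $F_i=\mathcal{E}\mf{a}_i$. On $L_{\alpha,H}$ the product satisfies $|F_1F_2|\sim\alpha^2$, and the balanced condition gives $|F_i|\le H^{1}\cdot 2\alpha$ up to constants. We run the $3$-adic residue tree of Section \ref{section lin to bilin} on each $\mf{a}_i$, from the top level $j=0$ down to $J_0$. At level $k$ we prune away a node $(k,r)$ whenever the corresponding piece $\sup|U_{k,r}|$ is large compared to $\alpha/H$, measured through the normalised $\ell^2$ mass; an equivalent criterion is $A_{k,r}\gtrsim (\alpha/H)^2$ after rescaling. This yields a decomposition $F_i=\sum_k F_{i,k}+F_i^{\mathrm{low}}$ together with square functions
\[
G_k=\Bigl(\sum_{r}|U_{k,r}|^2\Bigr)^{1/2},
\]
and stopping regions $\Omega_k\subset\Gamma_Q$, namely the points where $G_k$ first exceeds a threshold $c\alpha/H$. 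The low region $L$ is where no stopping occurs.

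\textbf{Low region.} On $L$ the pruned functions approximate $F_i$ up to an error $\ll\alpha/H$, which is Lemma \ref{260911lemma4_1}. Because both factors are bounded by $H\alpha$, the error in the product is at most $\ll\alpha^2$. Hence $|F_1F_2|\sim\alpha^2$ still holds for the low parts. The low parts have bounded Fourier overlap on the grid, so an $L^4$/$L^2$ counting argument gives
\[
\alpha^6\,\#(L\cap\Gamma_Q)/Q^2\lesssim H^2A^3.
\]
The factor $H^2$ comes from trading $\|F\|_\infty\lesssim H\alpha$ against the $L^4$ bound, which follows from the residue-class separation modulo $3$ via the standard bilinear $L^4$ (Zygmund-type) identity.

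\textbf{Stopping regions.} For each $k$, on $\Omega_k\cap L_{\alpha,H}$ we localise to the $\Gamma_{k,Q}$-blocks, i.e.\ cosets of the subgroup adapted to level $k$. On each block we apply the local bilinear estimate of Lemma \ref{260911lemma4_3}. This bounds the count on a block by $\alpha^{-6}H^2$ times the product of local $\ell^2$ masses, times $G_k^2$-type weights. Summing over blocks in a fixed $k$ is then controlled by the mean and variance bounds of Lemma \ref{260911lemma4_2}. Summing over $k$ uses the global square-function bound $\sum_k\|G_k\|^2\lesssim A$ from the same lemma, which is the telescoping/orthogonality analogue of the identity $\sum_{j,r}\Lambda_{j,r}\le A^{4/3}$ used earlier. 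It is this step that removes the $\log$ in the number of scales.

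\textbf{Main obstacle.} The delicate step is this last summation over $k$. We must ensure that each point of $L_{\alpha,H}$ is charged to a single stopping scale, that the local estimate produces a quantity that is summable (a difference of square functions rather than $A$ itself), and that the pruning error does not accumulate across scales. We would first try Chebyshev on $G_k^2-G_{k+1}^2$ on $\Omega_k$. If that fails to close, we would fall back to a weighted $L^3$ variant of the argument used for Lemma \ref{lem:bound L3ofD}.
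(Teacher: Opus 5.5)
There is a genuine gap. Your outline has the right architecture: pruning, square functions, stopping regions, the local bilinear estimate on $\Gamma_{k,Q}$-blocks, and variance control. But both thresholds sit at the wrong scale. Also, the parts of Key Lemmas I and II you rely on (the error bound $2A/\lambda$ and the variance bound $\lambda^2A$) are statements about the paper's specific pruned objects, not about yours.

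\textbf{Where the content lies.} For $\alpha\le\sqrt{2A}$ the claim is trivial. For $\alpha\lesssim H\sqrt A$, the discrete bilinear $L^4$ bound already gives $\alpha^6\#(L_{\alpha,H}\cap\Gamma_Q)/Q^2\le\alpha^2 Q^{-2}\sum_{z\in\Gamma_Q}|U_1U_2|^2\lesssim\alpha^2A^2\lesssim H^2A^3$. So everything happens in the regime $\alpha\gg H\sqrt A$.

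\textbf{Your pruning does nothing useful there.} The node-mass criterion $A_{k,r}\gtrsim(\alpha/H)^2$ never fires, since $A_{k,r}\le A$; it is also not equivalent to $\sup|U_{k,r}|\gtrsim\alpha/H$. An amplitude cut at level $\mu$ only yields a variance bound $\mu^2A$, hence a final bound $\alpha^2\mu^2A$, which is $\alpha^4A/H^2$ for $\mu=\alpha/H$. Your low-region step fails for the same reason: an $L^4/L^2$ count gives only $\alpha^2A^2$, and $\|F_i\|_\infty\lesssim H\alpha$ cannot upgrade this to $H^2A^3$.

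\textbf{What the paper does instead.} It truncates pointwise, $F_{k,r}=h_{k,r}\mathbf{1}_{\{|h_{k,r}|\le\lambda\}}$, with $\lambda=100HA/\alpha$, which is small compared with $\sqrt A$ in this regime. It puts the discarded mass $\sum_{\ell\le k}\sum_r|R_{\ell,r}|^2$ into $G_k$. Since $R_{\ell,r}\ne0$ only where $|R_{\ell,r}|>\lambda$, one has $|R_{\ell,r}|\le\lambda^{-1}|R_{\ell,r}|^2$, and hence $|U_i-\mathcal{P}_{\rho_i}F_k|\le G_{k-1}/\lambda$. Discarding whole nodes would not give this pointwise inequality.

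\textbf{The stopping threshold.} It must be $2A$ for the sum-of-squares $G_k$ (whose mean is $A$), not $c\alpha/H$ for its square root. A fixed multiple of the mean is used twice.
On $L$ it gives $|\mathcal{P}_{\rho_1}F_{J_0}\mathcal{P}_{\rho_2}F_{J_0}|\le A<\alpha^2/2$ when $\alpha>\sqrt{2A}$. Hence $L\cap L_{\alpha,H}=\varnothing$, and the low region requires no counting at all.
On $\Omega_k$ it gives $G_k\le 2(G_k-A)=2\mathbb{E}_k(G_{J_0-1}-A)$. Combining Cauchy--Schwarz, the block structure of $\Omega_k$, and the variance bound $Q^{-2}\sum_z|G_{J_0-1}-A|^2\lesssim\lambda^2A$ gives $Q^{-2}\sum_k\sum_{z\in\Omega_k}G_k(z)^2\lesssim\lambda^2A$. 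The variance bound comes from C\'ordoba--Fefferman orthogonality plus $|h_{k,r}|\le 3\lambda$, telescoped over scales.
Chebyshev with Lemma \ref{260911lemma4_3} then yields $\alpha^2\lambda^2A\sim H^2A^3$. So the factor $H^2$ comes from $\lambda^2$, not from an $L^\infty$ bound.

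With your threshold $T=(c\alpha/H)^2\gg A$, the requirement $T/\lambda\ll\alpha/H$ forces $\lambda\gtrsim c^2\alpha/H$. The same chain then only gives $c^4\alpha^4A/H^2$.

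\textbf{The sum over scales.} The estimate you invoke, $\sum_k\|G_k\|^2\lesssim A$, is false. With your normalization each $\|G_k\|_{L^2}^2$ equals $A$, so the sum is of size $J_0A$. The scale-uniform quantity is $\sum_k\sum_{z\in\Omega_k}G_k(z)^2$, not a sum of full norms. It is controlled through the martingale relation $\mathbb{E}_iG_k=G_i$ together with the disjointness of the $\Omega_k$.
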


With Proposition \ref{260911prop3_1} in place, we give the proof of Proposition \ref{prop:bilinear to linear}.
\begin{proof}[Proof of Proposition \ref{prop:bilinear to linear}]
We first show that Proposition \ref{260911prop3_1} implies the corresponding estimate on the continuous torus. For $\theta=(\theta_1,\theta_2)\in\T^2$, define
\begin{equation*}
a_i^\theta(n):=a_i(n)e(n\theta_1+n^2\theta_2),\qquad i=1,2.
\end{equation*}
Then
\begin{equation*}
\mathcal{E}\mf{a}_i^\theta(z)=\mathcal{E}\mf{a}_i(z+\theta).
\end{equation*}
Applying Proposition \ref{260911prop3_1} to the shifted pair gives, for every $\theta\in\T^2$, 
\begin{equation*}
\alpha^6\frac{1}{Q^2}
\sum_{z\in\Gamma_Q}
\mathbbm{1}_{L_{\alpha,H}}(z+\theta)
\lesim H^2A^3.
\end{equation*}
Integrating in $\theta$ and using translation invariance of Haar measure, we obtain
\begin{equation*}
\frac{1}{Q^2}
\sum_{z\in\Gamma_Q}
\int_{\T^2}\mathbbm{1}_{L_{\alpha,H}}(z+\theta)\,d\theta
=|L_{\alpha,H}|,
\end{equation*}
which in turn yields
\begin{equation}\label{260911e3_6}
\alpha^6 |L_{\alpha,H}|
\lesim H^2A^3,
\qquad H\ge1,\ \alpha>0.
\end{equation}

We next bound $\mathcal{B}(m)$ in term of $\mathcal{K}(m)$ via a general broad-narrow principle. Let $m$ be chosen such that 
\begin{equation*}
\#(
\supp(\mathfrak{a}_1) \cup 
\supp(\mathfrak{a}_2)
)\le m. 
\end{equation*}
By homogeneity, we may assume without loss of generality that
\begin{equation}\label{260913e3_equalmass}
\|\mf{a}_1\|_{\ell^2}
=
\|\mf{a}_2\|_{\ell^2}.
\end{equation}
For $\alpha>0$, define the narrow superlevel set
\begin{equation*}
E_{\alpha,H}^{\mathrm{narrow}}
:=
\left\{
z\in\T^2:
\begin{aligned}
&|\mathcal{E}\mf{a}_1(z)\mathcal{E}\mf{a}_2(z)|^{1/2}>\alpha,\\
&|\mathcal{E}\mf{a}_1(z)|^6+|\mathcal{E}\mf{a}_2(z)|^6
>
H^6|\mathcal{E}\mf{a}_1(z)\mathcal{E}\mf{a}_2(z)|^3
\end{aligned}
\right\}.
\end{equation*}
Hence
\begin{equation*}
\begin{split}
E_{\alpha,H}^{\mathrm{narrow}}
\subset{}&
\set{
|\mathcal{E}\mf{a}_1|>2^{-1/6}H\alpha
}
\cup
\set{
|\mathcal{E}\mf{a}_2|>2^{-1/6}H\alpha
}.
\end{split}
\end{equation*}
By the definition of $\mathcal{K}(m)$ and \eqref{260913e3_equalmass},
\begin{equation*}
\alpha^6|E_{\alpha,H}^{\mathrm{narrow}}|
\lesim
H^{-6}\mathcal{K}(m)^6A^3.
\end{equation*}
Note also that
\begin{equation*}
\set{
|\mathcal{E}\mf{a}_1\mathcal{E}\mf{a}_2|^{1/2}>\alpha
}
\subset
E_{\alpha,H}^{\mathrm{narrow}}
\cup
\bigcup_{j\ge0}L_{2^j\alpha,H}.
\end{equation*}
By \eqref{260911e3_6},
\begin{equation*}
(2^j\alpha)^6|L_{2^j\alpha,H}|
\lesim H^2A^3.
\end{equation*}
Combining the above estimates, we obtain
\begin{equation*}
\begin{split}
&\alpha^6
\anorm{
\set{
|\mathcal{E}\mf{a}_1\mathcal{E}\mf{a}_2|^{1/2}>\alpha
}
}\\
&\qquad\lesim
H^{-6}\mathcal{K}(m)^6A^3
+
H^2A^3\sum_{j\ge0}2^{-6j}\\
&\qquad\lesim
A^3\pnorm{
H^2+H^{-6}\mathcal{K}(m)^6
}.
\end{split}
\end{equation*}
By the definition of the weak $L^3$ norm, we conclude that
\begin{equation*}
\mathcal{B}(m)^3
\lesim
H^2+H^{-6}\mathcal{K}(m)^6,
\qquad H\ge1.
\end{equation*}

\end{proof}


\subsection{Reduction to three Key Lemmas}\label{section 3.2}
The goal of this section is to reduce Proposition \ref{260911prop3_1} to three key lemmas, which will be proven in several forthcoming sections. 

Recall the setup at the beginning of Section \ref{260911section3}. We also denote 
\begin{equation*}
U_1:= \mathcal{E} \mf{a}_1, \ \ U_2:= \mathcal{E} \mf{a}_2.
\end{equation*}
As Proposition \ref{260911prop3_1} involves a discretized version of the level set estimate we need (say for instance \eqref{260911e3_6}), when proving this proposition, we will use discretized Fourier transforms. Let us be more precise. For $V: \Gamma_Q\to \C$, define 
\begin{equation*}
\widehat{V}(\xi, \eta):=
\frac{1}{Q^2}
\sum_{(x, t)\in \Gamma_Q}
V(x, t) e(-\xi x-\eta t), \ \ (\xi, \eta)\in (\Z/Q\Z)^2.
\end{equation*}
The Fourier inversion formula in this setting is 
\begin{equation*}
V(x, t)=
\sum_{\xi, \eta\ \mods Q}
\widehat{V}(\xi, \eta) e(\xi x+\eta t),
\end{equation*}
 Plancherel's theorem is in the form 
\begin{equation*}
\frac{1}{Q^2} 
\sum_{z\in \Gamma_Q} |V(z)|^2= \sum_{\xi, \eta\ \mods Q}|\widehat{V}(\xi, \eta)|^2,
\end{equation*}
and convolution is in the form
\begin{equation*}
\widehat{V_1 V_2}(\xi)=
\sum_{\zeta_1+\zeta_2=\xi} \widehat{V_1}(\zeta_1)\widehat{V_2}(\zeta_2).
\end{equation*}
Fix a residue $\rho\ \mods 3$, let $\mathcal{P}_{\rho} V$ be the frequency projection of $V$ to that residue class. \\

Fix an arbitrary $\lambda>0$\footnote{This is referred to as the \textit{pruning constant} in \cite{GLY21}}. Put 
\begin{equation*}
F_0:= U_1+ U_2.
\end{equation*}
For $k=1, \dots, J_0,$ we first apply a frequency decomposition (into $3$-adic balls) 
\begin{equation*}
h_{k, r}(x, t):= 
\sum_{\substack{
\xi, \eta\ \mods Q\\
\xi\equiv r (3^{J_0+1-k})
}}
\widehat{F}_{k-1}(\xi, \eta) e(\xi x+\eta t),
\end{equation*}
and then denote 
\begin{align}\label{260911e4_8}
F_{k, r}(z)&:= h_{k, r}(z) \mathbbm{1}_{\{
|h_{k, r}(z)|\le \lambda
\}
},\\
R_{k, r}(z)&:= 
h_{k, r}(z)-F_{k, r}(z),\label{260911e4_9}\\
F_k(z)&:= 
\sum_{r\ \mods 3^{J_0+1-k}} F_{k, r}(z).
\end{align}
Roughly speaking, what \eqref{260911e4_8} does is to remove wave packets with large amplitudes (in the language of \cite{GLY21}). If we pick $\lambda$ appropriately, the contribution from wave packets whose amplitudes are bigger than $\lambda$ will be negligible, see the discussion in  \cite[Section 7]{GLY21} on the pruning of wave packets. 

Next, we introduce square functions, and  do a high-low decomposition based on these square functions, in a way essentially the same as in \cite{GLY21}. Denote
\begin{equation*}
G_{k}(z):= 
\sum_{r\ \mods 3^{J_0-k}}
|h_{k+1, r}(z)|^2+
\sum_{\ell=1}^k
\sum_{r\ \mods 3^{J_0+1-\ell}}
|R_{\ell, r}(z)|^2.
\end{equation*}
The decomposition of $G_k$ can be understood in the following heuristic way: The second term in $G_k$ compensates for the energy discarded in the previous pruning steps. This choice preserves the total mass and the cross-scale relation $\E_{k-1}G_k=G_{k-1}$, while retaining local constancy on $\Gamma_{k,Q}$-blocks.

Next, for $0\le k< J_0$, denote 
\begin{equation*}
\Gamma_{k, Q}:= 
(3^{-(J_0-k)}\Z/\Z)^2\subset \Gamma_Q.
\end{equation*}
A $\Gamma_{k, Q}$-block is a coset $z_0+ \Gamma_{k, Q}$; it is a $3$-adic ball of radius $3^{J_0-k}$ centered at $z_0$. We will examine local constancy properties on these blocks, so that frequency supports of functions are not changed much under truncations like the one in \eqref{260911e4_8}. For instance, we will see that the square function $G_k$ is constant on each $\Gamma_{k, Q}$-block (see Lemma \ref{260911lemma4_1} for more details), and this will allow us to apply bilinear restriction estimates locally (see for instance Lemma \ref{260911lemma4_3} or its counterpart  \cite[Lemma 7.5]{GLY21}). \\

Next, we decompose the level set $L_{\alpha, H}$ into high and low sets, similarly to \cite[Section 7.3]{GLY21}. Denote 
\begin{equation}\label{260912e4_13}
\Omega_k:=\{
z\in \Gamma_Q: 
G_k(z)> 2A, G_{\ell}(z)\le 2A \text{ for } 0\le \ell< k
\}, \ 1\le k< J_0,
\end{equation}
and 
\begin{equation}\label{260912e4_14}
L:= 
\{
z\in \Gamma_Q: G_{\ell}(z)\le 2A \text{ for } 0\le \ell< J_0
\}.
\end{equation}
Let us remark here that $\Omega_k$ and $L$ all have their counterparts in \cite[Section 7.3]{GLY21}, but they carry more accurate information here and involve no $\log$ losses. Let us be slightly more accurate here. We first accept Lemma \ref{260911lemma4_1} and that the square function $G_k$ is constant on each $\Gamma_{k, Q}$-block. Note that for two neighboring square functions $G_k$ and $G_{k+1}$, the scales involved in $\Gamma_{k, Q}$ and $\Gamma_{k+1, Q}$ differ only by an absolute constant, while the scales of two neighboring square functions in \cite[Section 7.1]{GLY21} differ by a $\log$ factor. As a consequence, in \eqref{260912e4_13} and \eqref{260912e4_14} when we were doing high-low decompositions, we also only needed to use an absolute constant (choosing $2$ is enough); in comparison, \cite{GLY21} had to lose another $\log$ factor, see Section 7.3 therein. \\

We are now ready to state the three key lemmas for the proof of the level set estimate in Proposition \ref{260911prop3_1}. They all hold for every $\lambda>0$. Recall that $U_1, U_2$ have frequencies supported in two distinct residue classes modulo $3$, and we pick $\rho_1\neq \rho_2$ and assume 
\begin{equation*}
\mathcal{P}_{\rho_i} U_i=U_i, \ \ i=1, 2.
\end{equation*}

\begin{lemma}[Key Lemma I]\label{260911lemma4_1}
 Under the above notation, we have 
\begin{enumerate}
\item The sets $\Omega_1, \dots, \Omega_{J_0-1}, L$ partition $\Gamma_Q$.
\item Each $\Omega_k$ is a union of $\Gamma_{k, Q}$-blocks, and $L$ is a union of $\Gamma_{J_0-1, Q}$-blocks. 
\item For $i=1, 2$, we have 
\begin{equation}\label{260912e4_16}
|U_i(z)-
\mathcal{P}_{\rho_i} F_k(z)
|\le \frac{2A}{\lambda}, \ \ z\in \Omega_k,
\end{equation}
and 
\begin{equation}\label{260912e4_17}
|U_i(z)-
\mathcal{P}_{\rho_i} F_{J_0}(z)
|\le \frac{2A}{\lambda}, \ \ z\in L.
\end{equation}
\item We also have 
\begin{equation*}
|\mathcal{P}_{\rho_1} F_{J_0}(z)|^2+
|\mathcal{P}_{\rho_2} F_{J_0}(z)|^2
\le 
G_{J_0-1}(z)\le 2A
\end{equation*}
on $L$. 
\end{enumerate}
\end{lemma}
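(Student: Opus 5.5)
The plan is to first establish a structural invariant for the pruning iteration, and then read off the four items. For $k\ge1$ and $r \bmod 3^{J_0+1-k}$, I will prove by induction on $k$ two facts. First, every frequency $(\xi,\eta)$ in the support of $\widehat{F}_{k-1}$ with $\xi\equiv r \ (3^{J_0+1-k})$ also satisfies $\eta\equiv \varphi_k(r) \pmod{3^{J_0+1-k}}$, where $\varphi_k(r)$ is a residue depending only on $r$ (morally $r^2$). Second, consequently $|h_{k,r}|$ is invariant under translations by $\Gamma_{k-1,Q}$.

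The base case is $k=1$. Here $3^{J_0}>\diam$ of the supports, so each $h_{1,r}$ is a single term $a_n e(nx+n^2t)$ (with $a$ the coefficients of $\mf a_1$ or $\mf a_2$). It has constant modulus $|a_n|$, and $\eta=n^2$ is determined by $\xi=n$.

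For the inductive step I will use two observations.
\begin{itemize}
\item Translating $h_{k,r}$ by $z_0\in\Gamma_{k-1,Q}$ multiplies each Fourier mode by $e(\xi x_0+\eta t_0)$. Since $x_0,t_0\in 3^{-(J_0+1-k)}\Z$ and $(\xi,\eta)\equiv (r,\varphi_k(r))$ modulo $3^{J_0+1-k}$, this phase is a single constant, so the modulus is preserved.
\item The cutoff $\mathbbm 1_{\{|h_{k,r}|\le\lambda\}}$ is then $\Gamma_{k-1,Q}$-invariant. Its Fourier transform is therefore supported on the dual lattice $(3^{J_0+1-k}\Z)^2$. Hence $F_{k,r}$ and $R_{k,r}$ keep the frequency congruences of $h_{k,r}$ modulo $3^{J_0+1-k}$, and a fortiori modulo $3^{J_0-k}$.
\end{itemize}
Regrouping the $r$'s into classes mod $3^{J_0-k}$ then gives the invariant at stage $k+1$, with $\varphi_{k+1}(r')=\varphi_k(r)\bmod 3^{J_0-k}$. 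I expect this bookkeeping to be the main obstacle: the frequencies in $\eta$ are no longer exactly parabolic after truncation, only congruent modulo the right power of $3$. It must be checked carefully that the lattice generated is coarse enough to act by a constant phase on the next, smaller group $\Gamma_{k,Q}\subset\Gamma_{k-1,Q}$. If the exact phase bookkeeping fails, I would instead track the support in the form $\eta\equiv 2r\xi - r^2$, which is the form that appears in $\Xi_{j,r}$.

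Given this invariant, item 2 follows. $G_k$ is $\Gamma_{k,Q}$-invariant, since $|h_{k+1,r}|$ is $\Gamma_{k,Q}$-invariant and $|R_{\ell,r}|$ is $\Gamma_{\ell-1,Q}$-invariant with $\Gamma_{k,Q}\subset\Gamma_{\ell-1,Q}$ for $\ell\le k$. Since $\Omega_k$ is defined by conditions on $G_0,\dots,G_k$, it is a union of $\Gamma_{k,Q}$-blocks, and $L$ is a union of $\Gamma_{J_0-1,Q}$-blocks.

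Item 1 follows from $G_0=\sum_r|h_{1,r}|^2=\sum_n|a_n|^2=A\le 2A$. This makes $k=0$ never the first exceedance, so each point either lies in a unique $\Omega_k$ with $1\le k<J_0$ or lies in $L$.

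For item 3, I will telescope. Since $F_{\ell-1}=\sum_r h_{\ell,r}$, we have $F_{\ell-1}-F_\ell=\sum_r R_{\ell,r}$, and so
\[
U_i-\mathcal P_{\rho_i}F_k=\mathcal P_{\rho_i}(F_0-F_k)=\sum_{\ell\le k}\sum_{r\equiv\rho_i (3)}R_{\ell,r}.
\]
Here I use that $R_{\ell,r}$ has $\xi\equiv r \pmod{3^{J_0+1-\ell}}$ with $J_0+1-\ell\ge1$, so $\mathcal P_{\rho_i}$ selects exactly the $r\equiv\rho_i \pmod 3$. On the support of $R_{\ell,r}$ we have $|R_{\ell,r}|>\lambda$, hence $|R_{\ell,r}|\le |R_{\ell,r}|^2/\lambda$. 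This gives
\[
|U_i-\mathcal P_{\rho_i}F_k|\le \lambda^{-1}\sum_{\ell\le k}\sum_r|R_{\ell,r}|^2\le \lambda^{-1}G_{k-1},
\]
using $|R_{k,r}|\le|h_{k,r}|$ to absorb the $\ell=k$ terms into the first sum of $G_{k-1}$. On $\Omega_k$, and likewise on $L$ with $k=J_0$, we have $G_{k-1}\le 2A$, which yields \eqref{260912e4_16} and \eqref{260912e4_17}.

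Item 4 is similar. We have $\mathcal P_{\rho}F_{J_0}=F_{J_0,\rho}$ with $r$ ranging mod $3$, and $|F_{J_0,\rho}|\le|h_{J_0,\rho}|$. So $\sum_\rho|\mathcal P_\rho F_{J_0}|^2\le\sum_r|h_{J_0,r}|^2\le G_{J_0-1}\le 2A$ on $L$.
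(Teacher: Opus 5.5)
Your proposal is correct and follows essentially the paper's proof: an induction carrying frequency congruences through the truncations via translation-invariance of $|h_{k,r}|$ (hence of the cutoff), $G_0=A$ for item 1, local constancy plus nesting of the groups $\Gamma_{k,Q}$ for item 2, and the telescoping bound $|U_i-\mathcal{P}_{\rho_i}F_k|\le \lambda^{-1}G_{k-1}$ for items 3 and 4. The only difference is that you track $\eta$ only modulo $3^{J_0+1-k}$, which suffices for Key Lemma I. To close your induction, fix $\varphi_k(r)=r^2$, i.e.\ $\eta\equiv\xi^2 \pmod{3^{J_0+1-k}}$; then the three lifts of $r'$ automatically agree. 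The paper instead proves the stronger inclusion in $\Theta_{J_0+1-k,r}$, i.e.\ $\eta\equiv\xi^2 \bmod 3^{2(J_0+1-k)}$, because the Cordoba--Fefferman step, Lemma~\ref{260912lemma6_2}, needs it later.
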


The first and second items can be easily seen from local constancy properties; the third item has its counterpart in \cite[Section 7.4]{GLY21} and the last item in \cite[Section 7.6]{GLY21}.

\begin{lemma}[Key Lemma II]\label{260911lemma4_2}
We have the mean identity 
\begin{equation}\label{260912e4_19}
\frac{1}{Q^2}
\sum_{z\in \Gamma_Q}
G_k(z)=A, \ \ 0\le k< J_0,
\end{equation}
the variance control 
\begin{equation}\label{260911e4_20}
\frac{1}{Q^2}
\sum_{z\in \Gamma_Q}
|G_{J_0-1}(z)-A|^2
\lesim 
\lambda^2 A,
\end{equation}
and 
\begin{equation}\label{260912e4_21}
\frac{1}{Q^2}
\sum_{k=1}^{J_0-1}
\sum_{z\in \Omega_k} 
G_k(z)^2\lesim \lambda^2 A. 
\end{equation}
\end{lemma}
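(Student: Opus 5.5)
The plan is to exploit the martingale structure of the square functions $G_k$ with respect to the $3$-adic filtration generated by $\Gamma_{k,Q}$-blocks. First I would verify the mean identity \eqref{260912e4_19} by Plancherel and induction on $k$. For $k=0$, $G_0=\sum_{r\ \mods 3^{J_0}}|h_{1,r}|^2$. Since $h_{1,r}$ is the projection of $F_0=U_1+U_2$ onto frequencies $\xi\equiv r\ (3^{J_0})$, and $3^{J_0}$ exceeds the diameter of the supports, each $h_{1,r}$ is a single term $a_n e(nx+n^2t)$. Hence $G_0\equiv A$ pointwise, and in particular its average is $A$. For the inductive step, Plancherel on $\Gamma_Q$ and the disjointness of the residue classes give
\[
\frac{1}{Q^2}\sum_z\sum_{r\ \mods 3^{J_0-k}}|h_{k+1,r}|^2=\frac{1}{Q^2}\sum_z|F_k|^2 .
\]
Writing $F_k=\sum_r F_{k,r}$, I would then check that $F_{k,r}$ keeps frequency in the class $r\ \mods 3^{J_0+1-k}$ modulo a local-constancy argument. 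The point is that $|h_{k,r}|$ is constant on $\Gamma_{k-1,Q}$-blocks, since $h_{k,r}$ equals a character times a function of block-invariant variables, so the truncation multiplies by a block-constant indicator. This gives orthogonality, so $\frac{1}{Q^2}\sum|F_k|^2=\frac{1}{Q^2}\sum\sum_r|F_{k,r}|^2$. Since $|h_{k,r}|^2=|F_{k,r}|^2+|R_{k,r}|^2$ pointwise (disjoint supports), the average of $G_k$ equals that of $G_{k-1}$. The same computation, done blockwise, gives the stronger conditional identity $\mathbb{E}_{k-1}G_k=G_{k-1}$ mentioned in the text, where $\mathbb{E}_{k-1}$ averages over $\Gamma_{k-1,Q}$-blocks.

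Next I would prove the variance bound \eqref{260911e4_20} as a martingale-difference sum. Write
\[
G_{J_0-1}-A=\sum_{k=1}^{J_0-1}(G_k-G_{k-1}),
\]
using $G_0=A$. The increments are orthogonal in $\ell^2(\Gamma_Q)$ by the conditional-expectation identity. Thus it suffices to bound
\[
\sum_k \frac{1}{Q^2}\sum_z |G_k-G_{k-1}|^2\lesssim \lambda^2 A .
\]
The increment $G_k-G_{k-1}$ equals $\sum_{r}\bigl(\sum_{r'\in\mathrm{Ch}}|h_{k+1,r'}|^2-|F_{k,r}|^2\bigr)$, which is a sum of cross terms $h_{k+1,r_1'}\overline{h_{k+1,r_2'}}$ with $r_1'\ne r_2'$, exactly as in Lemma \ref{lem: straightalgbra}. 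These cross terms are Fourier-orthogonal across different $r$, with supports in sets like $\Xi_{j,r}$. Hence the $\ell^2$ norm of the increment squared is $\lesssim\sum_r\frac{1}{Q^2}\sum_z|F_{k,r}|^4\le\lambda^2\sum_r\frac{1}{Q^2}\sum_z|F_{k,r}|^2$, using the pruning $|F_{k,r}|\le\lambda$. Summing over $k$ then requires $\sum_k\sum_r\|F_{k,r}\|_2^2\lesssim A$, which I would obtain not from a crude count but from telescoping: each level's mass is the mean of $G_k$ minus discarded mass. This is the main obstacle, because naively each level contributes $A$ and we would lose a factor $J_0$. The fix I would try is to replace $\|F_{k,r}\|_4^4$ by the genuine increment of $\sum_r |h|^4$, that is, $\sum|h_{k,r}|^4-\sum_{r'}|h_{k+1,r'}|^4$ up to constants. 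This is the analogue of the $\Lambda_{j,r}$ telescoping in Section \ref{section lin to bilin}, and it works since the children's fourth-power mass is strictly smaller. The result is a telescoping sum bounded by $\lambda^2\cdot$(total $L^2$ mass)$\le \lambda^2A$.

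Finally, for \eqref{260912e4_21}, I would use the stopping-time structure. On $\Omega_k$ we have $G_{k-1}\le 2A$, and $\Omega_k$ is a union of $\Gamma_{k,Q}$-blocks. Since $G_k-G_{k-1}$ is controlled by the increment, $G_k^2\lesssim A^2+|G_k-G_{k-1}|^2$ on $\Omega_k$. Moreover, $G_k>2A\ge G_{k-1}+(2A-G_{k-1})$ forces $A\lesssim G_k-G_{k-1}$ there, unless $G_{k-1}$ is already close to $2A$. Applying the cases sharply, with a Doob-type maximal argument to handle the case where $G_{k-1}$ is already near $2A$, gives $G_k\lesssim |G_k-G_{k-1}|+\max_{\ell<k}|G_\ell-A|$ on $\Omega_k$. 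Since the $\Omega_k$ are disjoint, summing over $k$ yields $\sum_k\frac{1}{Q^2}\sum_{\Omega_k}G_k^2$ bounded by the square function of the martingale plus Doob's $L^2$ maximal inequality for $\max_\ell|G_\ell-A|$. Both of these are controlled by the variance bound already obtained, giving $\lesssim\lambda^2A$.
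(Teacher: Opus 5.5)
You prove the mean identity and set up \eqref{260911e4_20} as the paper does: block-wise Plancherel gives $\E_{k-1}G_k=G_{k-1}$, $G_0\equiv A$, and the increments $D_k=G_k-G_{k-1}$ are orthogonal. The gap is in the one step that decides whether a factor $J_0$ is lost. You rightly discard $\|D_k\|_2^2\lesssim\lambda^2\sum_b\|F_{k,b}\|_2^2$ and propose to bound $\|D_k\|_2^2$ by an increment of fourth powers. But your justification ("the children's fourth-power mass is strictly smaller") only asserts that such an increment is nonnegative. It does not compare the increment with $\|D_k\|_2^2$. With mere Fourier disjointness even nonnegativity fails: $f_1=1+e(x)$, $f_2=-\varepsilon e(2x)$ give $\|f_1+f_2\|_4^4<\|f_1\|_4^4+\|f_2\|_4^4$ for small $\varepsilon>0$.

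The missing ingredient is the C\'ordoba--Fefferman bi-orthogonality of the parabolic caps, Lemma~\ref{260912lemma6_2}. It rests on $\eta\equiv\xi^2\ (\mods 3^{2j})$ from Lemma~\ref{260911lemma5_1}, not just on the residue class of $\xi$. The cross terms are between siblings $F_{k,b}$, not between the $h_{k+1,\cdot}$ as you wrote: $D_k=\sum_r\sum_{b\neq b'}F_{k,b}\overline{F_{k,b'}}$ with $b\equiv b'\equiv r\ (\mods 3^{J_0-k})$. Applying the lemma both to $D_k$ and to $|h_{k+1,r}|^4$ gives the exact identity
\[
\sum_{z\in\Gamma_Q}|D_k(z)|^2=\frac12\sum_{z\in\Gamma_Q}\Bigl(\sum_{r\ (\mods 3^{J_0-k})}|h_{k+1,r}(z)|^4-\sum_{b\ (\mods 3^{J_0+1-k})}|F_{k,b}(z)|^4\Bigr),
\]
and this is what makes any telescoping possible.

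Even granting this identity, your telescoping is not correct as stated. Your increment $\sum|h_{k,r}|^4-\sum|h_{k+1,r'}|^4$ has the tree inverted, since $h_{k+1,\cdot}$ lives at the coarser modulus $3^{J_0-k}$. Also, the increments above do not telescope directly, because the fine-level term involves $F_{k,b}$ rather than $h_{k,b}$. Using $|h_{k,b}|^4=|F_{k,b}|^4+|R_{k,b}|^4$, the sum over $k$ equals
\[
\sum_r|h_{J_0,r}|^4-\sum_b|F_{1,b}|^4+\sum_{k=2}^{J_0-1}\sum_b|R_{k,b}|^4 .
\]
To finish you then:
\begin{itemize}
\item drop the nonpositive term $-\sum_b|F_{1,b}|^4$; the $k=1$ contributions must cancel exactly rather than be estimated, because neither $h_{1,b}$ nor $R_{1,b}$ is $O(\lambda)$;
\item use $|R_{k,b}|\le|h_{k,b}|\le 3\lambda$ for $k\ge2$ and $|h_{J_0,r}|\le 3\lambda$ (item 3 of Lemma~\ref{260911lemma5_1});
\item telescope the discarded $L^2$ masses via \eqref{260912e5_6} and \eqref{260911e5_7}, reaching $\lambda^2Q^{-2}\sum_z|F_1(z)|^2\le\lambda^2A$.
\end{itemize}

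Your argument for \eqref{260912e4_21} is correct and differs from the paper's. You use $G_k\le 2|G_k-A|$ on $\Omega_k$ together with Doob's $L^2$ maximal inequality for the martingale $G_\ell-A$, which needs only that the $\Omega_k$ are disjoint. The paper instead writes $G_k-A=\E_k(G_{J_0-1}-A)$ and applies Jensen on the $\Gamma_{k,Q}$-blocks that make up $\Omega_k$. That uses the block structure of $\Omega_k$ but avoids any maximal inequality.
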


This lemma does not have a very clear counterpart in \cite{GLY21}. More precisely speaking, it combines several steps in \cite{GLY21}. For readers familiar with the high-low decomposition argument in \cite{GLY21}, Lemma \ref{260911lemma4_2} can be understood in the following way. In \eqref{260911e4_20} we subtract $G_{J_0-1}$ by its mean $A$, and obtain the high-frequency part of $G_{J_0-1}$. After taking squares $|G_{J_0-1}(z)-A|^2$ we can go to the frequency side and see frequency disjointness, and the bound \eqref{260911e4_20} tells us exactly how to control the high-frequency part.

\begin{lemma}[Key Lemma III]\label{260911lemma4_3}
For $1\le k< J_0$ and every $\Gamma_{k, Q}$-block $B$, it holds that 
\begin{equation}\label{260911e4_22}
\sum_{z\in B}
|\mathcal{P}_{\rho_1} F_k(z)
\mathcal{P}_{\rho_2} F_k(z)
|^2
\lesim
\sum_{z\in B}
G_k(z)^2.
\end{equation}
\end{lemma}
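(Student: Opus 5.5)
The plan is to prove \eqref{260911e4_22} by \emph{local orthogonality} on the block $B$. On $B$ the relevant wave packets should reduce to pure characters with constant amplitudes, so that the bilinear sum on $B$ becomes an $\ell^2$ sum over pairs of residues. Transversality (the two factors sit in distinct residue classes modulo $3$) then limits how often two pairs can produce the same character on $B$.

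\textbf{Step 1 (block structure of the pieces).} Write $N:=3^{J_0-k}$, so that $B=z_0+(N^{-1}\Z/\Z)^2$. Since $\widehat{F}_k$ is supported on $\xi$ modulo $Q$, we can group $F_k$ by residues $r'$ modulo $N$:
\[
F_k=\sum_{r'\ (N)} h_{k+1,r'}.
\]
Then $\mathcal{P}_{\rho_i}F_k=\sum_{r'\equiv\rho_i\,(3)}h_{k+1,r'}$, because $\rho_i$ is determined by $r'$ modulo $3$ as long as $N\ge 3$, i.e.\ $k<J_0$. I would prove by induction on $k$ the following local structure claim: on every $\Gamma_{k,Q}$-block $B$,
\[
h_{k+1,r'}(z)=c_{B,r'}\,e\bigl(r'x+q_{r'}t\bigr),\qquad z=(x,t)\in B,\qquad q_{r'}\equiv r'^2 \pmod N .
\]
For $k=0$ this comes from the parabolic support of $U_1+U_2$: on $B$, the exponent $n^2 t$ depends only on $n$ modulo $N$. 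For the inductive step, $F_{k,r}$ is $h_{k,r}$ times an indicator. By the inductive hypothesis $|h_{k,r}|$ is constant on the finer $\Gamma_{k-1,Q}$-blocks, hence so is the indicator, and multiplying by it does not destroy the character structure on the coarser block once we regroup modulo $3^{J_0-k}$. This is the same local constancy mechanism behind items 1--2 of Lemma \ref{260911lemma4_1}, and I expect to extract it from that proof. A corollary is that $\sum_{r'}|h_{k+1,r'}|^2=\sum_{r'}|c_{B,r'}|^2$ is constant on $B$. Since $G_k$ also contains the nonnegative terms $\sum_\ell\sum_r|R_{\ell,r}|^2$, this gives
\[
G_k(z)\ \ge\ \sum_{r'}|c_{B,r'}|^2\qquad\text{on } B .
\]

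\textbf{Step 2 (orthogonality on $B$).} On $B$ we then have
\[
\mathcal{P}_{\rho_1}F_k\,\mathcal{P}_{\rho_2}F_k
=\sum_{\substack{r_1\equiv\rho_1\\ r_2\equiv\rho_2}} c_{B,r_1}c_{B,r_2}\,
e\bigl((r_1+r_2)x+(q_{r_1}+q_{r_2})t\bigr).
\]
The characters of the group $(N^{-1}\Z/\Z)^2$ are indexed by $(\Z/N\Z)^2$, so Plancherel on $B$ gives
\[
\sum_{z\in B}\bigl|\mathcal{P}_{\rho_1}F_k\,\mathcal{P}_{\rho_2}F_k\bigr|^2
=|B|\sum_{(s,q)}\Bigl|\sum_{(r_1,r_2)\mapsto(s,q)}c_{B,r_1}c_{B,r_2}\Bigr|^2 ,
\]
where $(s,q)$ runs over $(\Z/N\Z)^2$ and the inner sum is over pairs with $r_1+r_2\equiv s$ and $r_1^2+r_2^2\equiv q$ modulo $N$.

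The collision count is the transversality step. From $(s,q)$ we get $(r_1-r_2)^2\equiv 2q-s^2 \pmod N$. Because $\rho_1\neq\rho_2$, the difference $r_1-r_2$ is a unit modulo $3$. A unit square modulo $3^{J_0-k}$ has exactly two square roots, as $3$ is an odd prime. The sign of the root is then fixed by the residue of $r_1-r_2$ modulo $3$, so each $(s,q)$ arises from at most one ordered pair $(r_1,r_2)$.

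\textbf{Step 3 (conclusion).} Using Step 2 and then Step 1,
\[
\sum_{z\in B}\bigl|\mathcal{P}_{\rho_1}F_k\,\mathcal{P}_{\rho_2}F_k\bigr|^2
\le |B|\Bigl(\sum_{r'}|c_{B,r'}|^2\Bigr)^2
\le \sum_{z\in B}G_k(z)^2 .
\]

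The main obstacle is the local structure claim in Step 1. One must check that the frequency modulus $3^{J_0+1-k}$ used in defining $h_{k,r}$ matches the block scale $3^{J_0-k}$ in both $x$ and $t$. The parabolic phase relation $q_{r'}\equiv r'^2$ must survive the truncations at every earlier step, and the discrete (modulo $Q$) Fourier transform must not introduce wrap-around. If the residues are only controlled modulo $3^{J_0+1-k}$, or are controlled modulo $N$ only up to a bounded ambiguity, then each $(s,q)$ comes from $O(1)$ pairs instead of one. Cauchy--Schwarz on these $O(1)$ pairs then still yields \eqref{260911e4_22}, at the cost of an absolute constant.
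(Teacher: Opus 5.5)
Your proposal is correct and is essentially the paper's proof. The paper likewise identifies $B$ with $(\Z/3^j\Z)^2$ and observes that each $h_{k+1,r}$, restricted to $B$, has Fourier support at the single point $(r,r^2)\bmod 3^j$; this is your ``constant times a character'' claim, and it follows at once from item 1 of Lemma \ref{260911lemma5_1} plus the local constancy computation, so your separate induction is not needed. The paper then uses the same transversality computation to show every convolution fiber has at most one element, and bounds the local masses by $G_k$. The only cosmetic difference is that the paper packages this as the local bilinear inequality $\sum_{z\in B}|V_1V_2|^2\le(\#B)^{-1}\|V_1\|_{\ell^2(B)}^2\|V_2\|_{\ell^2(B)}^2$ for $V_i=\mathcal{P}_{\rho_i}F_k$ and then uses that $G_k$ is constant on $B$, whereas you use the pointwise bound $G_k\ge\sum_{r'}|c_{B,r'}|^2$ on $B$ directly.
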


From the form of \eqref{260911e4_22}, it is not difficult to see that Lemma \ref{260911lemma4_3} is a bilinear estimate. Its counterpart is Lemma 7.5 in \cite{GLY21}. The second item in Lemma \ref{260911lemma4_1}, which is a structural theorem for the set $\Omega_k$, is key to the validity of Lemma \ref{260911lemma4_3}. \\

In the rest of this section, we will assume Lemmas \ref{260911lemma4_1}--\ref{260911lemma4_3}, and finish the proof of Proposition \ref{260911prop3_1}.

\begin{proof}[Proof of Proposition \ref{260911prop3_1}]
 If
$0<\alpha\le \sqrt{2A}$, then
\[
\alpha^6 \frac{\#(L_{\alpha,H}\cap \Gamma_Q)}{Q^2}
\le 8A^3\lesim H^2A^3.
\]
Assume from now on that $\alpha>\sqrt{2A}$.

Take
\begin{equation*}
\lambda:=\frac{100HA}{\alpha}.
\end{equation*}
Fix $z\in L_{\alpha,H}\cap\Omega_k$ with $1\le k<J_0$. By
\eqref{260912e4_16} and the choice of $\lambda$,
\[
\left|U_i(z)-\mathcal{P}_{\rho_i}F_k(z)\right|
\le \frac{2A}{\lambda}
=\frac{\alpha}{50H},
\qquad i=1,2.
\]
On $L_{\alpha,H}$ we also have
\[
|U_1(z)|,\ |U_2(z)|<2H\alpha.
\]
Combining these estimates, we obtain
\[
\left|
\mathcal{P}_{\rho_1}F_k(z)\mathcal{P}_{\rho_2}F_k(z)
-U_1(z)U_2(z)
\right|
<\frac{\alpha^2}{2}.
\]
Since $|U_1(z)U_2(z)|\ge\alpha^2$ on $L_{\alpha,H}$, the triangle
inequality therefore yields
\begin{equation}\label{260913e4_25}
\left|
\mathcal{P}_{\rho_1}F_k(z)\mathcal{P}_{\rho_2}F_k(z)
\right|
\ge \frac{\alpha^2}{2},
\qquad
z\in L_{\alpha,H}\cap\Omega_k,
\qquad
1\le k<J_0.
\end{equation}

The same argument, using \eqref{260912e4_17}, gives
\[
\left|
\mathcal{P}_{\rho_1}F_{J_0}(z)
\mathcal{P}_{\rho_2}F_{J_0}(z)
\right|
\ge \frac{\alpha^2}{2},
\qquad z\in L_{\alpha,H}\cap L.
\]
However, on $L$, item $4$ in Lemma \ref{260911lemma4_1} gives
\[
\left|
\mathcal{P}_{\rho_1}F_{J_0}(z)
\mathcal{P}_{\rho_2}F_{J_0}(z)
\right|
\le
\frac12\left(
|\mathcal{P}_{\rho_1}F_{J_0}(z)|^2
+
|\mathcal{P}_{\rho_2}F_{J_0}(z)|^2
\right)
\le A<\frac{\alpha^2}{2}.
\]
Thus the two bounds are incompatible unless
$L_{\alpha,H}\cap L=\varnothing$.

By item $2$ in Lemma \ref{260911lemma4_1}, each $\Omega_k$ is a union
of $\Gamma_{k,Q}$-blocks. Hence Lemma \ref{260911lemma4_3} gives
\begin{equation}\label{260913e4_27}
\sum_{z\in\Omega_k}
\left|
\mathcal{P}_{\rho_1}F_k(z)\mathcal{P}_{\rho_2}F_k(z)
\right|^2
\lesim
\sum_{z\in\Omega_k}G_k(z)^2,
\qquad 1\le k<J_0.
\end{equation}
Using item $1$ in Lemma \ref{260911lemma4_1},
\eqref{260913e4_25}, \eqref{260913e4_27}, and
\eqref{260912e4_21}, we obtain
\[
\begin{aligned}
\alpha^6\frac{\#(L_{\alpha,H}\cap\Gamma_Q)}{Q^2}
&=
\frac{\alpha^6}{Q^2}
\sum_{k=1}^{J_0-1}\#(L_{\alpha,H}\cap\Omega_k)\\
&\le
\frac{4\alpha^2}{Q^2}
\sum_{k=1}^{J_0-1}\sum_{z\in\Omega_k}
\left|
\mathcal{P}_{\rho_1}F_k(z)\mathcal{P}_{\rho_2}F_k(z)
\right|^2\\
&\lesim
\frac{\alpha^2}{Q^2}
\sum_{k=1}^{J_0-1}\sum_{z\in\Omega_k}G_k(z)^2\\
&\lesim
\alpha^2\lambda^2A
\lesim H^2A^3.
\end{aligned}
\]
This proves Proposition \ref{260911prop3_1}.
\end{proof}

\section{Key Lemma I: Proof of Lemma \ref{260911lemma4_1}}\label{sec3.3}
\subsection{Frequency supports}

We first show that when doing truncations like the one in \eqref{260911e4_8}, frequency supports of relevant functions do not change. 

For $1\le j\le J_0$ and $r\ (\mods 3^j)$, define 
\begin{equation*}
\Theta_{j, r}:= 
\{
(\xi, \eta)\ \mods Q: \xi\equiv r\ (\mods 3^j), \ \eta\equiv \xi^2\ (\mods 3^{2j})
\}.
\end{equation*}
For these frequency caps, we have the nested property
\begin{equation*}
\Theta_{j+1, r'}\subset \Theta_{j, r},
\end{equation*}
whenever $r'\equiv r\ (\mods 3^j)$.

\begin{lemma}\label{260911lemma5_1}
Under the above notation, the following statements holds. 
\begin{enumerate}
\item For $1\le k\le J_0$, 
it holds that 
\begin{equation*}
\supp(\widehat{h}_{k, r}
), \ \supp(\widehat{F}_{k, r}
), \ \supp(\widehat{R}_{k, r}
)\subset 
\Theta_{J_0+1-k, r}.
\end{equation*}
\item We have 
\begin{equation*}
F_{k-1}=
\sum_{
r\ \mods 3^{J_0+1-k}
}
h_{k, r}, 
\end{equation*}
and $|F_{k, r}|\le \lambda$. 
\item For $2\le k\le J_0$, we have 
\begin{equation*}
h_{k, r}=
\sum_{
\substack{
r'\ \mods 3^{J_0+2-k}\\
r'\equiv r (3^{J_0+1-k})
}
}
F_{k-1, r'},
\end{equation*}
and $|h_{k, r}|\le 3\lambda$. 
\end{enumerate}
\end{lemma}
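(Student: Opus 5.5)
The plan is an induction on $k$, proving all three items together, with a local constancy property of functions whose Fourier support lies in $\Theta_{j,r}$ as the key input.

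\textbf{Local constancy.} Let $V$ have $\supp\widehat V\subset\Theta_{j,r}$ and set $\phi_{j,r}(x,t):=e(rx+r^2t)$. I claim that $V\overline{\phi_{j,r}}$ is invariant under translations by the subgroup
\[
\Lambda_{j,r}:=\{(x,t)\in\Gamma_Q: 3^j x\in\Z,\ 3^{2j}t\in\Z,\ 3^j(x+2rt)\in \Z\}.
\]
More precisely, I would aim to show that $V(z+w)=\phi_{j,r}(w)V(z)$ for $w$ in a suitable subgroup $W_{j,r}$ of translations. The reason is that every frequency $(\xi,\eta)=(r+3^j m,\eta)$ satisfies $\eta\equiv r^2+2r3^j m\pmod{3^{2j}}$, so the characters $e(\xi x+\eta t)\overline{\phi_{j,r}}$ all agree on $W_{j,r}$. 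The paper's identity $U_{j+1,r'}(x,t)=e(rx+r^2t)\,\mathcal E\mathfrak b(3^j(x+2rt),3^{2j}t)$ is exactly this phenomenon.

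Given this, multiplying $V$ by an indicator $\mathbbm 1_{\{|V|\le\lambda\}}$ does not move the Fourier support out of $\Theta_{j,r}$. Indeed, $|V|$ is $W_{j,r}$-invariant, so $\mathbbm 1_{\{|V|\le\lambda\}}\,V\overline{\phi_{j,r}}$ is still $W_{j,r}$-invariant. Its Fourier transform is therefore supported on the annihilator of $W_{j,r}$, and after multiplying back by $\phi_{j,r}$ the support lands in the frequencies translated by $(r,r^2)$. I would check directly that this annihilator-translate equals $\Theta_{j,r}$. The annihilator consists of $(\xi',\eta')$ with $3^j\mid\xi'$ and $3^{2j}\mid(\eta'-2r\xi')$. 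Translating by $(r,r^2)$ gives $\xi\equiv r\pmod{3^j}$ and $\eta\equiv r^2+2r(\xi-r)\equiv \xi^2-(\xi-r)^2\equiv\xi^2\pmod{3^{2j}}$, since $3^{2j}\mid(\xi-r)^2$. So the translate is exactly $\Theta_{j,r}$.

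\textbf{Induction.} The base case $k=1$: $\widehat F_0$ is supported on the parabola $\eta=\xi^2$, which lies in $\bigcup_r\Theta_{J_0,r}$. Hence $h_{1,r}$ has support in $\Theta_{J_0,r}$, these pieces sum to $F_0$, and item 1 follows for $h_{1,r}$ and, by the constancy step, for $F_{1,r}$ and $R_{1,r}=h_{1,r}-F_{1,r}$.

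For the inductive step, assume $\supp\widehat F_{k-1,r'}\subset\Theta_{J_0+2-k,r'}$. Since $\Theta_{J_0+2-k,r'}\subset\Theta_{J_0+1-k,r}$ whenever $r'\equiv r$, each $F_{k-1,r'}$ has $\xi$-frequencies $\equiv r'$. Projecting $F_{k-1}=\sum_{r'}F_{k-1,r'}$ onto $\xi\equiv r\pmod{3^{J_0+1-k}}$ then picks out exactly the three children $r'$ of $r$. This gives item 3, and with it $|h_{k,r}|\le3\lambda$ and $\supp\widehat h_{k,r}\subset\Theta_{J_0+1-k,r}$. The constancy lemma then transfers the support to $F_{k,r}$ and $R_{k,r}$. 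Item 2 holds by definition: $F_{k-1}=\sum_r h_{k,r}$ by Fourier inversion over the partition of residues, and $|F_{k,r}|\le\lambda$ by the truncation.

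\textbf{Main obstacle.} The delicate step is getting the local constancy subgroup right on the discrete grid $\Gamma_Q$. One has to identify the dual of $\Theta_{j,r}-(r,r^2)$ inside $(\Z/Q\Z)^2$ and confirm it is a genuine subgroup. This is where $Q=3^{J_0^+}$ with $J_0^+\ge 2J_0$ is needed, so that $3^{2j}\mid Q$ for every $j\le J_0$ and the congruence mod $3^{2j}$ makes sense mod $Q$. I would first verify directly that the set $\{(\xi',\eta'): 3^j\mid\xi',\ 3^{2j}\mid\eta'-2r\xi'\}$ is a subgroup of $(\Z/Q\Z)^2$, take $W_{j,r}$ to be its annihilator in $\Gamma_Q$, and then invoke the finite-group duality that a function invariant under $W$ has Fourier support in $W^\perp$.
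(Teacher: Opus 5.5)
Your proof is correct and follows the paper's route. The paper's criterion, that $\mathrm{supp}\,\widehat V\subset\Theta_{j,r}$ iff $V$ picks up the phases $e(r/3^j)$ and $e(-r^2/3^{2j})$ under the translations $(3^{-j},0)$ and $(-2r3^{-2j},3^{-2j})$, is exactly your annihilator duality. These two translations generate $W_{j,r}=\{(x,t)\in\Gamma_Q:\ 3^{2j}t\in\mathbb{Z},\ 3^j(x+2rt)\in\mathbb{Z}\}$, and your induction for items 2 and 3 matches the paper's.

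The only slip is your first-displayed $\Lambda_{j,r}$. Its extra condition $3^jx\in\mathbb{Z}$ makes it a proper subgroup of $W_{j,r}$ unless $3^j\mid r$, so invariance under it alone would not give the full congruence modulo $3^{2j}$. Your final choice of $W_{j,r}$ as the annihilator of $\Theta_{j,r}-(r,r^2)$ is the right one.
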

\begin{proof}[Proof of Lemma \ref{260911lemma5_1}]
By the definition of $h_{k,r}$, the classes
$r\pmod{3^{J_0+1-k}}$ partition the spatial frequencies. Hence
\[
F_{k-1}
=
\sum_{r\ (\mods 3^{J_0+1-k})}h_{k,r}.
\]
Moreover, \eqref{260911e4_8} immediately gives
\[
|F_{k,r}|\le \lambda.
\]
This proves item $2$.

Fix $1\le j\le J_0$ and $r\ (\mods 3^j)$. Under the condition
$\xi\equiv r\ (\mods 3^j)$, the congruence
\[
\eta\equiv \xi^2\ (\mods 3^{2j})
\]
is equivalent to
\[
\eta-2r\xi+r^2\equiv 0\ (\mods 3^{2j}).
\]
Note that $\supp(\widehat V)\subset \Theta_{j,r}$ if and only if
\[
V\left(x+\frac{1}{3^j},t\right)
=e\left(\frac{r}{3^j}\right)V(x,t)
\]
and
\[
V\left(x-\frac{2r}{3^{2j}},t+\frac{1}{3^{2j}}\right)
=e\left(-\frac{r^2}{3^{2j}}\right)V(x,t).
\]
The phase factors above have modulus one, so $|V|$ is invariant under
both translations. Hence
\[
\supp\left(
\widehat{V\mathbbm{1}_{\{|V|\le\lambda\}}}
\right)
\subset \Theta_{j,r}.
\]
The same conclusion holds for
$V\mathbbm{1}_{\{|V|>\lambda\}}$.

We now apply this observation to the iteration. Since the Fourier support of
$F_0=U_1+U_2$ lies on the parabola $\eta=\xi^2$, we have
\[
\supp(\widehat{h}_{1,r}),
\ \supp(\widehat{F}_{1,r}),
\ \supp(\widehat{R}_{1,r})
\subset \Theta_{J_0,r}.
\]
Inductively, if the required support property holds at level $k-1$, then
decomposing $F_{k-1}$ according to its residue classes modulo
$3^{J_0+1-k}$ and using the nesting of the frequency caps gives
\[
\supp(\widehat{h}_{k,r})
\subset \Theta_{J_0+1-k,r}.
\]
The truncation property above then gives the same inclusion for
$\widehat{F}_{k,r}$ and $\widehat{R}_{k,r}$. This proves item $1$.

Finally, by item $1$, for each fixed
$r\ (\mods 3^{J_0+1-k})$ we have
\[
h_{k,r}
=
\sum_{\substack{
r'\ (\mods 3^{J_0+2-k})\\
r'\equiv r\ (3^{J_0+1-k})
}}
F_{k-1,r'}.
\]
There are exactly three residue classes $r'$ in this sum. Since item $2$
gives $|F_{k-1,r'}|\le\lambda$, it follows that
\[
|h_{k,r}|\le 3\lambda,
\qquad 2\le k\le J_0.
\]
This proves item $3$.
\end{proof}

Before proceeding, let us explain briefly the motivation for computing the following identities \eqref{260912e5_6} to \eqref{260912e5_10}. At each pruning step, the $L^2$ mass lost from $F_{k-1}$ is exactly transferred to the remainder $R_{k,r}$; after telescoping over the scales, the surviving mass together with all discarded masses is
exactly the original mass $A$. This will later give the mean identity for $G_k$.

By Lemma \ref{260911lemma5_1} and Plancherel's theorem, we obtain 
\begin{equation}\label{260912e5_6}
\frac{1}{Q^2}
\sum_{z\in \Gamma_Q}\sum_{
r\ \mods 3^{J_0+1-k}
}
|h_{k, r}(z)|^2=
\frac{1}{Q^2}
\sum_{z\in \Gamma_Q}
|F_{k-1}(z)|^2.
\end{equation}
By the definition \eqref{260911e4_9} and by disjointness in the physical variables, it holds
\begin{equation}\label{260911e5_7}
\frac{1}{Q^2}
\sum_{z\in \Gamma_Q}\sum_{
r\ \mods 3^{J_0+1-k}
}
|R_{k, r}(z)|^2=
\frac{1}{Q^2}
\sum_{z\in \Gamma_Q}
\pnorm{
|F_{k-1}(z)|^2
-
|F_k(z)|^2
}.
\end{equation}
Observe that 
\begin{equation}\label{260912e5_8}
\sum_{
r\ \mods 3^{J_0}
}
|h_{1, r}(z)|^2
=A, \ \forall z\in \Gamma_Q,
\end{equation}
and therefore 
\begin{equation}
\frac{1}{Q^2}
\sum_{z\in \Gamma_Q}|F_0(z)|^2=A.
\end{equation}
We sum over $k$ on both sides of \eqref{260911e5_7}, and obtain 
\begin{equation}\label{260912e5_10}
\frac{1}{Q^2} \sum_{z\in \Gamma_Q}\pnorm{
|F_k(z)|^2+\sum_{\ell=1}^k
\sum_r
|R_{\ell, r}(z)|^2
}=A,
\end{equation}
for $k\le J_0$.

\subsection{Local constancy properties}\label{260912subsection5_3}

Let $V$ be a function whose frequencies $(\xi, \eta)$ satisfy 
\begin{equation}\label{260912e5_17}
\xi\equiv r\ (\mods 3^j), \ \ \eta\equiv \xi^2\ (\mods 3^{2j}).
\end{equation}
In this subsection we will discuss local constancy properties of $V$. Readers familiar with $p$-adic languages will realized immediately that we are simply using $p$-adic uncertainty principle. \\

The support assumption \eqref{260912e5_17} implies that 
\begin{equation}\label{260912e5_18}
\xi\equiv r\ (\mods 3^j), \ \ \eta\equiv r^2\ (\mods 3^j).
\end{equation}
By the Fourier inversion formula, we obtain 
\begin{equation*}
V\pnorm{
x+\frac{u}{3^j}, 
t+ \frac{v}{3^j}
}=
\sum_{
\xi, \eta\ (\mods Q)
}
\widehat{V}(\xi, \eta)
e(\xi x+\eta t)
e\pnorm{
\frac{\xi u+ \eta v}{3^j}
},
\end{equation*}
which is further equal to 
\begin{equation*}
e\pnorm{
\frac{
r u+ r^2 v
}{3^j}
} V(x, t)
\end{equation*}
by applying \eqref{260912e5_18}. In other words, the function $|V|$ stays constant on each coset of $(3^{-j}\Z/\Z)^2$, which can also be understood as a $3$-adic ball of radius $3^j$. As a consequence, we obtain that 
\begin{equation*}
|h_{\ell, r}|^2, |F_{\ell, r}|^2, |R_{\ell, r}|^2 
\end{equation*}
are constant on every $\Gamma_{\ell-1, Q}$-block. By the nested properties of the blocks, we easily see that $G_k$ is constant on every $\Gamma_{k, Q}$-block, for every $0\le k< J_0$.

\subsection{Completion of the proof}
\begin{proof}[Proof of Lemma \ref{260911lemma4_1}]
We first prove item $1$ in Lemma \ref{260911lemma4_1}.  Fix $z\in \Gamma_Q$. If $G_{\ell}(z)\le 2A$ for every $0\le \ell< J_0$, then $z\in L$. Otherwise the set
\begin{equation*}
\{
\ell: 0\le \ell< J_0, G_{\ell}(z)> 2A
\}
\end{equation*}
has a least element, which we call $k$. By \eqref{260912e5_8} and the definition of $G_0$, we see that 
\begin{equation}\label{260912e5_17b}
G_0(z)=A, \ \forall z\in \Gamma_Q,
\end{equation}
which implies that $k\ge 1$. By the minimality of $k$, we know that 
\begin{equation}\label{260912e5_24}
G_{\ell}(z)\le 2A, \ 0\le \ell< k,
\end{equation}
and therefore $z\in \Omega_k$. This finishes the proof of the item $1$.\\

We prove item $2$. We will only show that $\Omega_k$ is a union of $\Gamma_{k, Q}$-blocks; the proof for $L$ is similar. By the local constancy property in Sub-section \ref{260912subsection5_3}, we know that 
\begin{equation*}
G_{\ell}(z+w)=G_{\ell}(z), \ \forall z,
\end{equation*}
whenever $w\in \Gamma_{k, Q}$ and $0\le \ell\le k$, and therefore $z\in \Omega_k $ if and only if $z+w\in \Omega_k$. This finishes the proof of item $2$.\\

We turn to item $3$. 
To bound the approximation errors in \eqref{260912e4_16} and \eqref{260912e4_17}, we first write 
\begin{equation*}
\mathcal{P}_{\rho_i} F_0-
\mathcal{P}_{\rho_i} F_k
=
\sum_{\ell=1}^k
\sum_{
\substack{
r\ \mods 3^{J_0+1-\ell}\\
r\equiv \rho_i\ \mods 3
}
}
R_{\ell, r}.
\end{equation*}
Note that 
\begin{equation*}
|R_{\ell, r}(z)|\le \lambda^{-1} |R_{\ell, r}(z)|^2,
\end{equation*}
and therefore 
\begin{equation}\label{260912e5_13}
|U_i(z)-
\mathcal{P}_{\rho_i} F_k(z)
|
\le 
\frac{1}{\lambda}
\sum_{\ell=1}^k
\sum_r 
|R_{\ell, r}|^2.
\end{equation}
Recall the definition of the square function $G_{k-1}$ that 
\begin{equation}\label{260912e5_14}
G_{k-1}(z)=
\sum_{r\ \mods 3^{J_0-k+1}}
|
h_{k, r}(z)
|^2+
\sum_{\ell=1}^{k-1}
\sum_{
r\ \mods 3^{J_0+1-\ell} 
}
|R_{\ell, r}(z)|^2.
\end{equation}
By disjointness in the physical variables, we have
\begin{equation}\label{260912e5_15}
|h_{k, r}(z)|^2=
|F_{k, r}(z)|^2+
|R_{k, r}(z)|^2.
\end{equation}
We substitute the last two identities into \eqref{260912e5_13} and obtain 
\begin{equation}\label{260912e5_16}
|U_i(z)-
\mathcal{P}_{\rho_i} F_k(z)
|\le 
\frac{
G_{k-1}(z)
}{\lambda}.
\end{equation}
If $z\in \Omega_k$, then by \eqref{260912e5_24}, we know in particular that 
\begin{equation*}
G_{k-1}(z)\le 2A.
\end{equation*}
This, combined with \eqref{260912e5_16}, proves \eqref{260912e4_16}. The proof for \eqref{260912e4_17} is similar, and is left out. \\

In the end we prove item $4$. First, note that 
\begin{equation*}
|
\mathcal{P}_{\rho_1} F_{J_0}(z)
|^2
+
|
\mathcal{P}_{\rho_2} F_{J_0}(z)
|^2
\le 
\sum_{r\ (\mods 3)} |F_{J_0, r}(z)|^2
\end{equation*}
By \eqref{260912e5_14} and \eqref{260912e5_15}, this is 
\begin{equation*}
\le G_{J_0-1}(z)\le 2A,
\end{equation*}
whenever $z\in L$. This finishes the proof of item $4$. 
\end{proof}

\section{Key Lemma II: Proof of Lemma \ref{260911lemma4_2}}\label{sec3.4}

In this section we give the proof of Lemma \ref{260911lemma4_2}. The mean identity \eqref{260912e4_19} follows directly from \eqref{260912e5_6} and \eqref{260912e5_10}. Let us also remark here that \eqref{260912e5_8} tells us that 
\begin{equation*}
G_0(z)=A, \ \forall z\in \Gamma_Q,
\end{equation*}
which was observed in \eqref{260912e5_17b}. In the rest of this section, we will prove \eqref{260911e4_20} and \eqref{260912e4_21}.

\subsection{Block averages}

To simplify our notation, we introduce the block average: Using the language of $3$-adic analysis, It is just an  average over $3$-adic balls. For $0\le k< J_0$ and a function $V: \Gamma_Q\to \C$, define 
\begin{equation*}
(\E_k V)(z):=
\frac{1}{
\#(\Gamma_{k, Q})
}
\sum_{w\in \Gamma_{k, Q}}V(z+w).
\end{equation*}
Let us also collect a few simple properties of the average: For $0\le i\le k< J_0$, we have 
\begin{equation}\label{260912e6_3}
\E_i\E_k=\E_i, \ \ \E_k\E_i= \E_i;
\end{equation}
if $S\subset \Gamma_Q$ is a union of $\Gamma_{k, Q}$-blocks, then 
\begin{equation}\label{260912e6_4zz}
\sum_{z\in S} \E_k V(z)=
\sum_{z\in S} V(z), \ \ |\E_k V(z)|^2\le \E_k(|V|^2)(z),
\end{equation}
where the last inequality follows from Cauchy-Schwarz.

\subsection{The low lemma and Cordoba-Fefferman argument}

Fix $1\le k< J_0$ and put $j=J_0-k$. By item $3$ in Lemma \ref{260911lemma5_1}, we obtain 
\begin{equation*}
h_{k+1, r}= 
\sum_{
\substack{
b\ (\mods 3^{j+1})\\
b\equiv r\ (3^j)
}
}
F_{k, b}.
\end{equation*}
\begin{lemma}\label{260912lemma6_1}
Under the above notation, we have 
\begin{equation}\label{260912e6_10kk}
\E_{k-1}\pnorm{
\sum_{
r\ (\mods 3^j)
}
|h_{k+1, r}|^2
}
=
\sum_{
b\ (\mods 3^{j+1})
}
|F_{k, b}|^2.
\end{equation}
\end{lemma}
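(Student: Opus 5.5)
The plan is to expand the square on the left-hand side, show that the diagonal terms are left unchanged by $\E_{k-1}$, and show that every cross term is annihilated by it. Recall $j=J_0-k$. By item $3$ of Lemma \ref{260911lemma5_1}, each $h_{k+1,r}$ with $r\pmod{3^j}$ is the sum of the three functions $F_{k,b}$ with $b\pmod{3^{j+1}}$ and $b\equiv r\ (3^j)$. Hence
\[
\sum_{r\ (\mods 3^j)}|h_{k+1,r}|^2=\sum_{b\ (\mods 3^{j+1})}|F_{k,b}|^2+\sum_{r\ (\mods 3^j)}\ \sum_{\substack{b\neq b'\\ b\equiv b'\equiv r\ (3^j)}}F_{k,b}\overline{F_{k,b'}} .
\]
Since $\E_{k-1}$ is linear, it suffices to prove two facts: $\E_{k-1}|F_{k,b}|^2=|F_{k,b}|^2$, and $\E_{k-1}\bigl(F_{k,b}\overline{F_{k,b'}}\bigr)=0$ whenever $b\not\equiv b'\pmod{3^{j+1}}$.

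For both facts I will use the frequency support from item $1$ of Lemma \ref{260911lemma5_1}: $\supp \widehat F_{k,b}\subset\Theta_{J_0+1-k,b}=\Theta_{j+1,b}$. By the computation in Subsection \ref{260912subsection5_3}, applied with $j+1$ in place of $j$, for all integers $u,v$ this gives
\[
F_{k,b}\Bigl(x+\tfrac{u}{3^{j+1}},\,t+\tfrac{v}{3^{j+1}}\Bigr)=e\Bigl(\tfrac{bu+b^2v}{3^{j+1}}\Bigr)F_{k,b}(x,t).
\]
Note that $\Gamma_{k-1,Q}=(3^{-(J_0-k+1)}\Z/\Z)^2=(3^{-(j+1)}\Z/\Z)^2$. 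So $\E_{k-1}$ is exactly the average over the shifts $w=(u,v)/3^{j+1}$ with $0\le u,v<3^{j+1}$.

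For the diagonal terms, the phase has modulus one, so $|F_{k,b}|^2$ is invariant under every such shift. Its average over the block therefore equals itself.

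For a cross term $F_{k,b}\overline{F_{k,b'}}$, the shift by $w$ multiplies it by
\[
e\Bigl(\tfrac{(b-b')u+(b^2-b'^2)v}{3^{j+1}}\Bigr).
\]
Averaging over $u$ first (for each fixed $v$) produces the factor $3^{-(j+1)}\sum_{u}e\bigl((b-b')u/3^{j+1}\bigr)$. This vanishes, because $b-b'\not\equiv0\pmod{3^{j+1}}$: $b$ and $b'$ are distinct residues modulo $3^{j+1}$. Hence every cross term is killed, which gives \eqref{260912e6_10kk}.

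Equivalently, $\E_{k-1}$ is the Fourier projection onto frequencies $(\xi,\eta)$ with $3^{j+1}\mid\xi$ and $3^{j+1}\mid\eta$. The spatial frequencies of the cross term satisfy $\xi\equiv b-b'\not\equiv0\pmod{3^{j+1}}$, so they lie outside this set.

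I do not expect a genuine obstacle here. The only point requiring care is the index bookkeeping: checking that the block scale for $\E_{k-1}$ is $3^{-(j+1)}$, which matches the modulus $3^{j+1}$ of the caps $\Theta_{j+1,b}$ containing the support of $\widehat F_{k,b}$, rather than the coarser modulus $3^j$ of $h_{k+1,r}$.
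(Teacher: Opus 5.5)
Your proposal is correct and is essentially the paper's proof. You expand the square via item $3$ of Lemma \ref{260911lemma5_1}, then use $\mathrm{supp}\,\widehat{F}_{k,b}\subset\Theta_{j+1,b}$ to see that $|F_{k,b}|^2$ is constant on $\Gamma_{k-1,Q}$-blocks while every cross term with $b\not\equiv b'\pmod{3^{j+1}}$ averages to zero; your explicit character sum over $u$ simply spells out the paper's one-line remark that these cross terms have spatial frequencies nonzero modulo $3^{j+1}$, and your identification $\Gamma_{k-1,Q}=(3^{-(j+1)}\mathbb{Z}/\mathbb{Z})^2$ is exactly the bookkeeping required.
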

\begin{proof}[Proof of Lemma \ref{260912lemma6_1}]
By item $1$ in Lemma \ref{260911lemma5_1},
\[
\supp(\widehat{F}_{k,b})\subset \Theta_{j+1,b}.
\]
Hence, if $b\neq b'\ (\mods 3^{j+1})$, the spatial frequencies of
$F_{k,b}\widebar{F_{k,b'}}$ are nonzero modulo $3^{j+1}$, and therefore
\[
\E_{k-1}\left(F_{k,b}\widebar{F_{k,b'}}\right)=0.
\]
Moreover, $|F_{k,b}|$ is constant on each $\Gamma_{k-1,Q}$-block, so
\[
\E_{k-1}|F_{k,b}|^2=|F_{k,b}|^2.
\]

Using
\[
h_{k+1,r}
=
\sum_{\substack{
b\ (\mods 3^{j+1})\\
b\equiv r\ (3^j)
}}
F_{k,b},
\]
we may expand the square and obtain
\[
\E_{k-1}|h_{k+1,r}|^2
=
\sum_{\substack{
b\ (\mods 3^{j+1})\\
b\equiv r\ (3^j)
}}
|F_{k,b}|^2.
\]
Summing over $r\ (\mods 3^j)$ gives \eqref{260912e6_10kk}.
\end{proof}

By the definition of $G_k$ and \eqref{260912e6_10kk}, we obtain 
\begin{equation*}
\E_{k-1} G_k=\sum_{
r\ (\mods 3^{j+1})
} |F_{k, r}|^2
+
\sum_{\ell=1}^k \sum_{
r\ (\mods 3^{J_0+1-\ell})
}
|R_{\ell, r}|^2
\end{equation*}
where we also used the local constancy property of $|R_{\ell, r}|$. Using the pointwise disjointness in \eqref{260912e5_15}, we further obtain
\begin{equation*}
\E_{k-1} G_k=
\sum_{
r\ (\mods 3^{j+1})
} |h_{k, r}|^2
+
\sum_{\ell=1}^{k-1} \sum_{
r\ (\mods 3^{J_0+1-\ell})
}
|R_{\ell, r}|^2=
G_{k-1}.
\end{equation*}
This, combined with \eqref{260912e6_3} and local constancy properties of $G_k$, implies that 
\begin{equation}\label{260912e6_9z}
\E_i G_k=G_i, \ \ 0\le i\le k< J_0.
\end{equation}
Let us denote 
\begin{equation}\label{260912e6_10}
D_k:= 
G_k-G_{k-1}=
\sum_{
r\ (\mods 3^{J_0-k})
} |h_{k+1, r}|^2
-
\sum_{
b\ (\mods 3^{J_0+1-k})
}
|F_{k, b}|^2,
\end{equation}
where in the last equality we used again \eqref{260912e5_15}. Note that each $D_k$ is constant on $\Gamma_{k, Q}$-blocks, and satisfies 
\begin{equation}\label{260912e6_11}
\E_{k-1} D_k=0.
\end{equation}
If $1\le i< k< J_0$, then by the fact that $D_i$ is constant on each $\Gamma_{k-1, Q}$-block $B$, and by \eqref{260912e6_11}, we obtain the orthogonality relation 
\begin{equation}
\sum_{z\in B} D_i(z) D_k(z)= D_i(z_B) \sum_{z\in B} D_k(z)=0,
\end{equation}
for any $z_B\in B$. Recall that $G_0=A$. By expanding squares and by the above orthogonality relation, we obtain 
\begin{equation}\label{260912e6_13}
\frac{1}{Q^2}
\sum_{z\in \Gamma_Q}
|G_{J_0-1}(z)-A|^2=
\frac{1}{Q^2}
\sum_{k=1}^{J_0-1}
\sum_{z\in \Gamma_Q}|D_k(z)|^2.
\end{equation}
To bound the right hand side, we need the following simple lemma. 
\begin{lemma}\label{260912lemma6_2}
Let $1\le j\le J_0$, and suppose that $V_1, V_2, V_3, V_4$ satisfy $\supp \widehat{V_i}\subset \Theta_{j, r_i}$. Then 
\begin{equation*}
\sum_{z\in \Gamma_Q} V_1(z)V_2(z)\widebar{V_3(z)}
\widebar{V_4(z)}=0
\end{equation*}
unless, modulo $3^j$, either $(r_1=r_3 \text{ and } r_2=r_4)$ or $(r_1=r_4 \text{ and } r_2=r_3)$. 
\end{lemma}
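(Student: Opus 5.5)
The plan is to reduce the vanishing of the sum to a congruence statement on the Fourier side. We expand everything via the discrete Fourier transform on $\Gamma_Q$. By the convolution and inversion formulas,
\[
\frac{1}{Q^2}\sum_{z\in\Gamma_Q} V_1V_2\overline{V_3}\,\overline{V_4}
=\sum \widehat{V_1}(\zeta_1)\widehat{V_2}(\zeta_2)\overline{\widehat{V_3}(\zeta_3)}\,\overline{\widehat{V_4}(\zeta_4)},
\]
where the sum runs over $\zeta_i=(\xi_i,\eta_i)\in\Theta_{j,r_i}$ with $\zeta_1+\zeta_2\equiv\zeta_3+\zeta_4\pmod Q$. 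Since $3^{2j}\mid Q$, both coordinates of this relation descend to congruences modulo $3^{2j}$. It therefore suffices to show that no quadruple $(\zeta_1,\dots,\zeta_4)$ with $\zeta_i\in\Theta_{j,r_i}$ satisfies the linear relation unless $\{r_1,r_2\}=\{r_3,r_4\}$ as multisets modulo $3^j$.

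For the arithmetic, write $\xi_i=r_i+3^j u_i$. Then $\eta_i\equiv \xi_i^2 \pmod{3^{2j}}$, and $\xi_i^2\equiv r_i^2+2\cdot 3^j r_iu_i \pmod{3^{2j}}$.

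\begin{itemize}
\item The $\xi$-relation modulo $3^j$ gives $r_1+r_2\equiv r_3+r_4\pmod{3^j}$.
\item The $\eta$-relation gives $\xi_1^2+\xi_2^2\equiv\xi_3^2+\xi_4^2\pmod{3^{2j}}$, and in particular $r_1^2+r_2^2\equiv r_3^2+r_4^2\pmod{3^j}$.
\end{itemize}

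From these two congruences, $2(r_1r_2-r_3r_4)=(r_1+r_2)^2-(r_1^2+r_2^2)-(r_3+r_4)^2+(r_3^2+r_4^2)\equiv0$. Since $3$ is odd, $2$ is invertible, so $r_1r_2\equiv r_3r_4\pmod{3^j}$. Hence
\[
(r_1-r_3)(r_1-r_4)=r_1^2-r_1(r_3+r_4)+r_3r_4\equiv r_1^2-r_1(r_1+r_2)+r_1r_2=0\pmod{3^j}.
\]

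The main obstacle is that $\Z/3^j$ is not a domain, so $(r_1-r_3)(r_1-r_4)\equiv0$ does not by itself force one factor to vanish. This is where the full modulus $3^{2j}$ in the $\eta$-relation must be used. Set $s=r_1+r_2$, choose a lift with $r_3+r_4=s+3^jc$, and let $d_1=r_1-r_2$, $d_2=r_3-r_4$. Then
\[
\xi_1^2+\xi_2^2=\tfrac12\big((\xi_1+\xi_2)^2+(\xi_1-\xi_2)^2\big),
\]
and the $\xi$-relation modulo $Q$ (hence modulo $3^{2j}$) makes the $(\xi_1+\xi_2)^2$ terms on the two sides agree. So the $\eta$-relation reduces to $(\xi_1-\xi_2)^2\equiv(\xi_3-\xi_4)^2\pmod{3^{2j}}$, i.e. $(e_1-e_2)(e_1+e_2)\equiv0\pmod{3^{2j}}$ with $e_1=\xi_1-\xi_2$, $e_2=\xi_3-\xi_4$.

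Now $(e_1+e_2)-(e_1-e_2)=2e_2$, and $e_1+e_2\equiv 2(\xi_1-\xi_4)$ using $\xi_1+\xi_2\equiv\xi_3+\xi_4$. So we ask whether both factors $\xi_1-\xi_3$ and $\xi_1-\xi_4$ can be nonzero modulo $3^j$ while their product vanishes modulo $3^{2j}$. Write $v_3(\xi_1-\xi_3)=a$ and $v_3(\xi_1-\xi_4)=b$. The goal is to show $\min(a,b)\ge j$, i.e. $r_1\equiv r_3$ or $r_1\equiv r_4$, from which $r_2$ matches the other by $r_1+r_2\equiv r_3+r_4$.

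I have not closed this step. The product condition $a+b\ge 2j$ alone allows $a<j<b$, and I do not yet see what excludes that case. Before relying on the lemma as stated, I would check it directly in the smallest case $j=1$. If it fails there, I would try strengthening the hypothesis, for instance to frequencies in distinct residue classes modulo $3$ as in the setting of Key Lemma II, or to $\eta\equiv\xi^2\pmod Q$ as for the original parabola-supported functions. Alternatively, I would weaken the conclusion to a form that still suffices for \eqref{260911e4_20}.
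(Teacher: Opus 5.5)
\textbf{Verdict: the proposal does not prove the lemma, but only because of a logical slip in the last step.} Once that slip is fixed, your argument is the paper's proof.

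Your setup matches the paper. The Fourier expansion on $\Gamma_Q$ reduces the claim to excluding quadruples $\zeta_i\in\Theta_{j,r_i}$ with $\xi_1+\xi_2\equiv\xi_3+\xi_4$ and $\xi_1^2+\xi_2^2\equiv\xi_3^2+\xi_4^2 \pmod{3^{2j}}$. Your $e_1,e_2$ computation correctly gives $4(\xi_1-\xi_3)(\xi_1-\xi_4)\equiv 0 \pmod{3^{2j}}$; the paper gets the same identity by substituting $\xi_2\equiv\xi_3+\xi_4-\xi_1$. You are also right that the residue-level relation $(r_1-r_3)(r_1-r_4)\equiv 0 \pmod{3^j}$ is not enough. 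For example, $j=2$, $r_1=3$, $r_2=6$, $r_3=r_4=0$ satisfies both residue congruences modulo $9$.

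The slip: you translate the target ``$r_1\equiv r_3$ \emph{or} $r_1\equiv r_4 \pmod{3^j}$'' into $\min(a,b)\ge j$. That inequality expresses the conjunction. The disjunction is $\max(a,b)\ge j$, where $a,b$ are the $3$-adic valuations of $\xi_1-\xi_3$ and $\xi_1-\xi_4$, truncated at $2j$; this is well defined since $3^{2j}\mid Q$. It follows at once from $a+b\ge 2j$: if both $a<j$ and $b<j$, then $a+b<2j$, and the product could not vanish modulo $3^{2j}$.

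So the case $a<j\le b$ is not an obstruction. It is simply the branch $\xi_1\equiv\xi_4 \pmod{3^j}$, i.e.\ $r_1\equiv r_4$, and then $r_2\equiv r_3$ follows from $r_1+r_2\equiv r_3+r_4 \pmod{3^j}$, as you already noted. In the example above, $\xi_1-\xi_3$ and $\xi_1-\xi_4$ both have valuation exactly $1$, so their product has valuation $2<4$. That is how the $3^{2j}$ modulus excludes it.

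With this one-line pigeonhole step the lemma holds exactly as stated. No sanity check at $j=1$ is needed. Nor do you need to strengthen the hypotheses (distinct residues mod $3$, or $\eta\equiv\xi^2 \pmod Q$) or weaken the conclusion. As submitted, though, you leave the argument unfinished and cast doubt on a true statement.
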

\begin{proof}[Proof of Lemma \ref{260912lemma6_2}]
Suppose that
\[
\sum_{z\in \Gamma_Q}
V_1(z)V_2(z)\widebar{V_3(z)}\widebar{V_4(z)}
\neq 0.
\]
Since $3^{2j}\mid Q$ and
$\supp(\widehat{V_i})\subset\Theta_{j,r_i}$, Fourier orthogonality gives
$(\xi_i,\eta_i)\in\supp(\widehat{V_i})$ such that
\[
\xi_1+\xi_2\equiv \xi_3+\xi_4\ (\mods 3^{2j}),
\qquad
\xi_1^2+\xi_2^2\equiv \xi_3^2+\xi_4^2\ (\mods 3^{2j}).
\]
Since $2$ is invertible modulo $3^{2j}$, the two congruences imply
\[
(\xi_1-\xi_3)(\xi_1-\xi_4)\equiv 0\ (\mods 3^{2j}).
\]
It follows that either
\[
\xi_1\equiv \xi_3\ (\mods 3^{j})
\qquad\text{or}\qquad
\xi_1\equiv \xi_4\ (\mods 3^{j}).
\]
Combining this with $\xi_i\equiv r_i\ (\mods 3^j)$ and
\[
\xi_1+\xi_2\equiv\xi_3+\xi_4\ (\mods 3^j),
\]
we conclude that, modulo $3^j$, either
\[
r_1=r_3,\qquad r_2=r_4,
\]
or
\[
r_1=r_4,\qquad r_2=r_3.
\]
This proves the lemma.
\end{proof}

We continue to bound the right hand side of \eqref{260912e6_13}. We use the expression \eqref{260912e6_10} for $D_k$. By item 3 in Lemma \ref{260911lemma5_1} and by expanding squares, we obtain 
\begin{equation}\label{260912e6_15}
D_k=
\sum_{
r\ (\mods 3^{J_0-k})
}
\sum_{\substack{
b, b'\ (\mods 3^{J_0+1-k})\\
b\equiv b'\equiv r\ (3^{J_0-k}), \ b\neq b'
}
}
F_{k, b}\widebar{F_{k, b'}}.
\end{equation}
We substitute \eqref{260912e6_15} to \eqref{260912e6_13}, apply Lemma \ref{260912lemma6_2}, and obtain 
\begin{equation}\label{260912e6_16}
\sum_{z\in \Gamma_Q} |D_k(z)|^2=
\sum_{
r\ (\mods 3^{J_0-k})
}
\sum_{\substack{
b, b'\ (\mods 3^{J_0+1-k})\\
b\equiv b'\equiv r\ (3^{J_0-k}), \ b\neq b'
}
}
\sum_{z\in \Gamma_Q}
|F_{k, b}(z)F_{k, b'}(z)|^2.
\end{equation}
To bound the right hand side of \eqref{260912e6_16}, we apply Lemma \ref{260912lemma6_2} (the Cordoba-Fefferman argument) again to 
\begin{equation*}
h_{k+1, r}= 
\sum_{
\substack{
b\ (\mods 3^{J_0-k+1})\\
b\equiv r\ (3^{J_0-k})
}
}
F_{k, b},
\end{equation*}
and obtain 
\begin{equation*}
\begin{split}
\sum_{z\in \Gamma_Q}
|h_{k+1,r}(z)|^4
=
& 
\sum_{
\substack{
b\ (\mods 3^{J_0-k+1})\\
b\equiv r (3^{J_0-k})
}
}
\sum_{z\in \Gamma_Q} |F_{k, b}(z)|^4\\
&+ 
2
\sum_{\substack{
b, b'\ (\mods 3^{J_0+1-k})\\
b\equiv b'\equiv r\ (3^{J_0-k}), \ b\neq b'
}
}
\sum_{z\in \Gamma_Q}
|F_{k, b}(z)F_{k, b'}(z)|^2.
\end{split}
\end{equation*}
This, combined with \eqref{260912e6_16}, implies that 
\begin{equation*}
\frac{1}{Q^2}\sum_{z\in \Gamma_Q} |D_k(z)|^2
=\frac{1}{2Q^2}\sum_{z\in \Gamma_Q}
\Bigg(
\sum_{r\ (\mods 3^{J_0-k})}|h_{k+1,r}(z)|^4
-\sum_{b\ (\mods 3^{J_0+1-k})}|F_{k,b}(z)|^4
\Bigg).
\end{equation*}
We substitute this back to \eqref{260912e6_13}, sum over $k$ directly, and obtain 
\begin{equation}\label{260912e6_20}
\begin{aligned}
& \frac{1}{Q^2} \sum_{z \in \Gamma_Q}\left|G_{J_0-1}(z)-A\right|^2 \\
& \quad=\frac{1}{2Q^2} \sum_{z \in \Gamma_Q}\left(\sum_{r(\bmod 3)}\left|h_{J_0, r}(z)\right|^4-\sum_{r\left(\bmod 3^{J_0}\right)}\left|F_{1, r}(z)\right|^4\right) \\
& \quad+\frac{1}{2Q^2} \sum_{z \in \Gamma_Q} \sum_{k=2}^{J_0-1} \sum_{r\left(\bmod 3^{J_0+1-k}\right)}\left|R_{k, r}(z)\right|^4,
\end{aligned}
\end{equation}
where we used disjointness in the physical variables in the form
\begin{equation*}
|h_{k, r}|^4=|F_{k, r}|^4+|R_{k, r}|^4.
\end{equation*}
Recall in item 3 in Lemma \ref{260911lemma5_1} we obtained that $|h_{k, r}|\le 3\lambda$. This, together with \eqref{260912e6_20}, implies that 
\begin{equation*}
\begin{aligned}
& \frac{1}{Q^2} \sum_{z \in \Gamma_Q}\left|G_{J_0-1}(z)-A\right|^2 \\
& \quad \lesim \frac{\lambda^2}{Q^2} \sum_{z \in \Gamma_Q}\left(\sum_{r(\bmod 3)}\left|h_{J_0, r}(z)\right|^2+\sum_{k=2}^{J_0-1} \sum_{
r\ (\mods 3^{J_0+1-k})
}
\left|R_{k, r}(z)\right|^2\right).
\end{aligned}
\end{equation*}
Here we simply ignored the negative term in \eqref{260912e6_20}. To continue, we apply \eqref{260912e5_6} and \eqref{260911e5_7}, and obtain 
\begin{align*}
&\lesim 
\frac{\lambda^2}{Q^2} \sum_{z \in \Gamma_Q}\left(\left|F_{J_0-1}(z)\right|^2+\sum_{k=2}^{J_0-1}\left(\left|F_{k-1}(z)\right|^2-\left|F_k(z)\right|^2\right)\right)\\
&\lesim \frac{\lambda^2}{Q^2} \sum_{z\in \Gamma_Q} |F_1(z)|^2\lesim \lambda^2 A,
\end{align*}
where the last inequality follows from \eqref{260912e5_10}.  This finishes the proof of item 2 in Lemma \ref{260911lemma4_2}.

\subsection{Control of the stopping regions}
The proof of item 3 is similar to that of item 2. By \eqref{260912e6_9z}, we observe that 
\begin{equation*}
G_k-A= \E_k (G_{J_0-1}-A), \ 0\le k< J_0.
\end{equation*}
Note that on $\Omega_k$ we always have $G_k> 2A$ and therefore  
\begin{equation*}
0\le G_k\le 2(G_k-A).
\end{equation*}
As a consequence, 
\begin{equation*}
\begin{split}
\sum_{z\in \Omega_k} G_k(z)^2
&
\lesim \sum_{z\in \Omega_k} 
|
\E_k(
G_{J_0-1}-A
)(z)
|^2\\
& \lesim 
\sum_{z\in \Omega_k} \E_k(
|G_{J_0-1}-A|^2
)(z),
\end{split}
\end{equation*}
where in the last inequality we applied Cauchy-Schwarz (see also \eqref{260912e6_4zz}). By the fact that $\Omega_k$ is a union of $\Gamma_{k, Q}$-blocks, we further obtain 
\begin{equation*}
\sum_{z\in \Omega_k} G_k(z)^2\lesim 
\sum_{z\in \Omega_k} 
|G_{J_0-1}(z)-A|^2.
\end{equation*}
Now we sum over $k$, and obtain 
\begin{equation*}
\frac{1}{Q^2}
\sum_{k=1}^{J_0-1} \sum_{z\in \Omega_k} G_k(z)^2 \lesim 
\frac{1}{Q^2} \sum_{z\in \Gamma_Q} 
|G_{J_0-1}(z)-A|^2 \lesim \lambda^2 A,
\end{equation*}
where the last inequality follows from item 2. This finishes the proof of item 3.

\section{Key Lemma III: Proof of Lemma \ref{260911lemma4_3}}\label{sec3.5}
We close in this section the desired proof of Theorem \ref{thm: main1} by giving the proof of Lemma \ref{260911lemma4_3}.
\begin{proof}[Proof of Lemma \ref{260911lemma4_3}]
Fix $1\le k<J_0$ and set $j:=J_0-k$. Let
$B=z_0+\Gamma_{k,Q}$ be a $\Gamma_{k,Q}$-block, and set
\[
V_i:=\mathcal{P}_{\rho_i}F_k,\qquad i=1,2.
\]
We identify $B$ with $(\Z/3^j\Z)^2$ through
\[
(u,v)\longmapsto
z_0+\left(\frac{u}{3^j},\frac{v}{3^j}\right).
\]
Under this identification, the restriction to $B$ of a Fourier
character with frequency $(\xi,\eta)$ is, up to a constant phase, the
character with frequency $(\xi,\eta)$ modulo $3^j$.

By Lemma \ref{260911lemma5_1},
\[
\mathcal{P}_{\rho_i}F_k
=
\sum_{\substack{
r\ (\mods 3^j)\\
r\equiv\rho_i\ (\mods 3)
}}
h_{k+1,r}.
\]
Moreover,
$\supp(\widehat{h}_{k+1,r})\subset\Theta_{j,r}$.
Hence, after restriction to $B$, the Fourier support of $V_i$ is
contained in
\[
\mathcal{V}_{j,\rho_i}
:=
\left\{
(\xi,\eta)\in(\Z/3^j\Z)^2:
\xi\equiv\rho_i\ (\mods 3),\
\eta\equiv\xi^2\ (\mods 3^j)
\right\}.
\]

We first record the corresponding bilinear estimate on $B$:
\begin{equation}\label{260915e_local_bilinear}
\sum_{z\in B}|V_1(z)V_2(z)|^2
\le
\frac{1}{\#B}
\left(\sum_{z\in B}|V_1(z)|^2\right)
\left(\sum_{z\in B}|V_2(z)|^2\right).
\end{equation}
Indeed, by Plancherel on $(\Z/3^j\Z)^2$, it suffices to bound the
cardinality of each convolution fiber associated with
$\mathcal{V}_{j,\rho_1}$ and $\mathcal{V}_{j,\rho_2}$.
Fixing the sum frequency $(u,v)$, a possible first frequency
$(\xi,\eta)$ must satisfy
\[
v\equiv \xi^2+(u-\xi)^2\pmod{3^j},
\qquad
\xi\equiv\rho_1\pmod 3,
\qquad
u-\xi\equiv\rho_2\pmod 3.
\]
If $\xi'$ is another possible choice, subtraction gives
\[
2(\xi-\xi')(\xi+\xi'-u)\equiv0\pmod{3^j}.
\]
The second factor is congruent modulo $3$ to
$\rho_1-\rho_2$, and is therefore invertible modulo $3^j$.
Since $2$ is also invertible modulo $3^j$, we conclude that
$\xi\equiv\xi'\pmod{3^j}$. Thus every convolution fiber contains at
most one element, proving \eqref{260915e_local_bilinear}.

It remains to relate the local $L^2$ masses of $V_1,V_2$ to $G_k$.
For distinct $r,r'\pmod{3^j}$, the restrictions of
$h_{k+1,r}$ and $h_{k+1,r'}$ to $B$ have disjoint Fourier supports.
Therefore, by local Plancherel,
\[
\sum_{z\in B}\left(|V_1(z)|^2+|V_2(z)|^2\right)
\le
\sum_{z\in B}
\sum_{r\ (\mods 3^j)}
|h_{k+1,r}(z)|^2
\le
\sum_{z\in B}G_k(z).
\]
Combining this with \eqref{260915e_local_bilinear} and
$ab\le (a+b)^2/4$, we obtain
\[
\sum_{z\in B}|V_1(z)V_2(z)|^2
\le
\frac{1}{4\#B}
\left(\sum_{z\in B}G_k(z)\right)^2.
\]
Finally, $G_k$ is constant on every $\Gamma_{k,Q}$-block, thus
\[
\frac{1}{\#B}
\left(\sum_{z\in B}G_k(z)\right)^2
=
\sum_{z\in B}G_k(z)^2.
\]
Summing up we obtain
\[
\sum_{z\in B}
|\mathcal{P}_{\rho_1}F_k(z)
\mathcal{P}_{\rho_2}F_k(z)|^2
\lesssim
\sum_{z\in B}G_k(z)^2,
\]
which proves \eqref{260911e4_22}.
\end{proof}

\phantomsection
\addcontentsline{toc}{section}{References}
\bibliographystyle{alpha}
\bibliography{weak_L6_refs}

\end{document}